\theoremstyle:=definition,remark,plain\do{%
        \expandafter\g@addto@macro\csname th@\theoremstyle\endcsname{%
            \addtolength\thm@preskip\parskip
            }%
        }
\theoremstyle{theorem}
\newtheorem{thm}{Theorem}[section]
\newtheorem{prop}[thm]{Proposition}
\newtheorem{lem}[thm]{Lemma}
\newtheorem{conj}[thm]{Conjecture}
\theoremstyle{definition}
\newtheorem{defn}[thm]{Definition}
\newtheorem{eg}[thm]{Example}
\newtheorem{rmk}[thm]{Remark}
\numberwithin{equation}{section}
\newcommand{\Aa}{\mathbb{A}}
\newcommand{\CC}{\mathbb{C}}
\newcommand{\FF}{\mathbb{F}}
\newcommand{\PP}{\mathbb{P}}
\newcommand{\QQ}{\mathbb{Q}}
\newcommand{\RR}{\mathbb{R}}
\newcommand{\VV}{\mathbb{V}}
\newcommand{\ZZ}{\mathbb{Z}}
\newcommand{\sD}{\mathcal{D}}
\newcommand{\sO}{\mathcal{O}}
\DeclareMathOperator{\Bl}{Bl}
\DeclareMathOperator{\id}{id}
\DeclareMathOperator{\Bir}{Bir}
\DeclareMathOperator{\NS}{NS}
\DeclareMathOperator{\dP}{dP}
\DeclareMathOperator{\Bs}{Bs}
\DeclareMathOperator{\mult}{mult}
\DeclareMathOperator{\Diff}{Diff}
\DeclareMathOperator{\coreg}{coreg}
\title{Quartic surfaces up to volume \\ preserving equivalence}
\author{Tom Ducat}
\date{5 December 2022}                                           
\begin{document}

\maketitle

\begin{abstract}
We study log Calabi--Yau pairs of the form $(\mathbb{P}^3,\Delta)$, where $\Delta$ is a quartic surface, and classify all such pairs of coregularity less than or equal to one, up to volume preserving equivalence. In particular, if $(\mathbb{P}^3,\Delta)$ is a maximal log Calabi--Yau pair then we show that it has a toric model.  
\end{abstract}


\section{Introduction}

\subsection{Log Calabi--Yau pairs}

One topic of much contemporary interest is the geometry of log Calabi--Yau pairs.\footnote{See Definition~\ref{def!lCY} for the precise notion of what we consider to be a log Calabi--Yau pair, which is a somewhat more restrictive definition than that considered by other authors.} In part, this is because the interior of a maximal log Calabi--Yau pair is expected to have remarkable properties predicted from mirror symmetry (see e.g.\ \cite[\S1]{hk}). It is therefore important to understand the classification of log Calabi--Yau pairs up to volume preserving equivalence. 

\paragraph{The coregularity.}
The most important volume preserving invariant of a log Calabi--Yau pair $(X,\Delta_X)$ is an integer $0\leq \coreg(X,\Delta_X)\leq \dim X$, called the \emph{coregularity} of $(X,\Delta_X)$, which is the dimension of the smallest log canonical centre on a dlt modification of $(X,\Delta_X)$ (see Definition~\ref{def!coreg}). At one end of the spectrum are the pairs with $\coreg(X,\Delta_X)=\dim X$. These are necessarily of the form $(X,0)$, where $X$ is a variety with trivial canonical class $K_X\sim 0$, and hence this case reduces to the study of (strict) Calabi--Yau varieties. At the opposite end are the pairs satisfying $\coreg(X,\Delta_X)=0$, which are also known as \emph{maximal pairs}. These form the next most important case to understand, particularly given the role that maximal pairs play in mirror symmetry via the Gross--Siebert program. They are characterised by the property that the dual complex $\sD(X,\Delta_X)$ has the largest possible dimension. 

\paragraph{Toric models.}
The simplest examples of maximal log Calabi--Yau pairs are toric pairs, and these lie in a single volume preserving equivalence class. We say that $(X,\Delta_X)$ \emph{has a toric model} if it also belongs to the same volume preserving equivalence class as a toric pair. A \emph{toric model} for $(X,\Delta_X)$ is a particular choice of volume preserving map $\varphi \colon (X,\Delta_X) \dashrightarrow (T,\Delta_T)$ onto a toric pair $(T,\Delta_T)$. 

\begin{rmk}\label{rmk!toric-models}
We note three immediate consequences for a $d$-dimensional maximal log Calabi--Yau pair $(X,\Delta_X)$ with a toric model.
\begin{enumerate}
\item $X$ is rational, since it is birational to a toric variety.
\item Every irreducible component $D\subset\Delta_X$ is rational. This is because, after choosing a suitable toric model $\varphi\colon (X,\Delta_X)\dashrightarrow (T,\Delta_T)$, $D$ maps birationally onto a component of $\Delta_T$.
\item $\mathcal{D}(X,\Delta_X)$ is pl-homeomorphic to a sphere $\mathbb{S}^{d-1}$, by \cite[Theorem~13]{kx}.
\end{enumerate}
\end{rmk}

\subsection{Main result} \label{sec!main-result}

In this paper we consider log Calabi--Yau pairs of the form $(\PP^3,\Delta)$ where $\Delta$ is a quartic surface. The behaviour of $(\PP^3,\Delta)$ depends upon the trichotomy $\coreg(\PP^3,\Delta)=2$, $1$ or~$0$, which is equivalent to the condition that a general pencil of quartic surfaces passing though $\Delta$ defines a type I, type II or type III degeneration of K3 surfaces respectively. For the cases with $\coreg(\PP^3,\Delta)\leq1$, we prove the following result.

\begin{thm}\label{thm!main-result} Suppose that $(\PP^3,\Delta)$ is a log Calabi--Yau pair with $\coreg(\PP^3,\Delta)\leq 1$. Then there is a volume preserving map $\varphi\colon(\PP^3,\Delta)\dashrightarrow (\PP^1\times\PP^2,\Delta')$, where
\[ \Delta'= (\{0\}\times\PP^2) + (\PP^1\times E) + (\{\infty\}\times\PP^2) \in |{-K_{\PP^1\times\PP^2}}| \]
for a plane cubic curve $E\subset \PP^2_{x,y,z}$ such that
\begin{enumerate}
\item $\coreg(\PP^3,\Delta)=1$ if and only if $E$ is smooth,
\item if $\coreg(\PP^3,\Delta)=0$ (i.e.\ $(\PP^3,\Delta)$ is maximal) then $E=\VV(xyz)$. In particular, $\Delta'$ is the toric boundary of $\PP^1\times \PP^2$ and thus $(\PP^3,\Delta)$ has a toric model.
\end{enumerate}
\end{thm}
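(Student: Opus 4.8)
The plan is to produce the stated model by exhibiting $(\PP^3,\Delta)$, after a volume preserving modification, as a log Calabi--Yau Mori fibre space over $\PP^1$ whose general fibre is a coregularity-one surface pair. The guiding principle is the interpretation recorded in Section~\ref{sec!main-result}: coregularity $\leq 1$ means that a dlt modification of $(\PP^3,\Delta)$ has minimal log canonical centre of dimension $\leq 1$, and $\sD(\PP^3,\Delta)$ is either an interval (type II, coregularity $1$) or a triangulated sphere (type III, coregularity $0$). The two ``ends'' of the type II degeneration are exactly what should become the special fibres $\{0\}\times\PP^2$ and $\{\infty\}\times\PP^2$, while the horizontal part of the boundary should assemble into $\PP^1\times E$.

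First I would pass to a dlt modification $(Y,\Delta_Y)\to(\PP^3,\Delta)$, which is crepant and hence volume preserving, and locate a minimal log canonical centre $Z$ with $\dim Z\leq 1$. I would then construct a fibration $Y\dashrightarrow\PP^1$ adapted to $\Delta_Y$, built from a pair of boundary divisors playing the role of the two ends of the degeneration (the endpoints of the interval when $\sD$ is $1$-dimensional), whose associated pencil defines the map to $\PP^1$. Because the pair is log Calabi--Yau, $K_Y+\Delta_Y\sim 0$, so every step of a relative minimal model program over $\PP^1$ is crepant, hence volume preserving, and the output $Y'\to\PP^1$ is a log Calabi--Yau Mori fibre space. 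By adjunction each boundary component restricts to a coregularity $\leq 1$ surface pair, and the known classification of anticanonical pairs on del Pezzo surfaces forces the general fibre of $Y'\to\PP^1$ to be $(\PP^2,E)$ for a plane cubic $E$; contracting, fibrewise, the $(-1)$-curves meeting the boundary transversally in one point — a crepant operation — normalises the general fibre to $\PP^2$, so that $Y'\to\PP^1$ is a $\PP^2$-bundle.

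Finally I would identify this $\PP^2$-bundle with the trivial one: any two $\PP^2$-bundles over $\PP^1$ are related by elementary transformations centred on sections, and for the log Calabi--Yau structure at hand these are volume preserving, so the model is volume preserving equivalent to $(\PP^1\times\PP^2,\Delta')$ with horizontal boundary $\PP^1\times E$ and special fibres $\{0\}\times\PP^2$, $\{\infty\}\times\PP^2$. The dichotomy is then read off from $E$: if $E$ is smooth the minimal log canonical centres are the two elliptic curves $\{0,\infty\}\times E$, giving coregularity $1$; if a $0$-dimensional centre is present then $(\PP^2,E)$ must itself be maximal, and I would show that the only possibility compatible with the toric boundary of $\PP^1\times\PP^2$ is the triangle $E=\VV(xyz)$. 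The main obstacle is the middle step: constructing the fibration uniformly across all the singular quartic configurations and controlling the relative MMP and the bundle normalisation while staying crepant throughout; in the maximal case one must additionally rule out non-toric gluings of the boundary — equivalently, the monodromy or charge obstructions to a toric model — in order to pin $E$ down to $\VV(xyz)$.
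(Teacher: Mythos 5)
Your route is genuinely different from the paper's (the paper classifies all pairs with $\coreg(\PP^3,\Delta)\leq 1$ into eleven families according to the singularities of the quartic and joins them by explicit volume preserving Cremona links, reducing everything to the case of a plane plus the cone over a plane cubic, from which the map to $\PP^1\times\PP^2$ is immediate), but your central step is a genuine gap rather than a proof. You assert that on a dlt modification the two ``end'' boundary divisors of the interval define a pencil giving a fibration to $\PP^1$. They do not: for instance, for $\Delta$ a plane plus a smooth cubic surface (family (C.1)), the dual complex is an interval whose endpoint divisors have degrees $1$ and $3$; they are not linearly equivalent and no crepant modification made so far changes this. Producing a volume preserving map to a model on which the two end divisors \emph{are} members of a basepoint-free pencil --- equivalently, on which the pair fibres over $\PP^1$ with two boundary components as fibres --- is exactly the content of the theorem, so invoking such a fibration begs the question. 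Nor does the MMP supply it for free: running a $K_Y$-MMP is indeed crepant for the pair (since $K_Y+\Delta_Y\sim 0$), but its output is some Mori fibre space, not canonically one over a curve; $(\PP^3,\Delta)$ itself is already a Mori fibre space over a point, and nothing forces the base to be $\PP^1$ or the chosen divisors to become fibres.

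Even granting the fibration, three further steps are unjustified. First, the general fibre of a $3$-fold Mori fibre space over $\PP^1$ has \emph{relative} Picard rank $1$ but need not be $\PP^2$ (cubic surface fibrations are the standard example), and ``contracting the $(-1)$-curves fibrewise'' is not an operation available on the $3$-fold: those curves sweep out divisors that are in general not contractible, and monodromy permutes them. Second, the horizontal boundary component is a priori only a fibration in cubic curves over $\PP^1$; you give no argument that it is the \emph{product} $\PP^1\times E$, which requires isotriviality and then triviality of this family --- neither follows formally, and this is where the coregularity hypothesis must actually be used. Third, elementary transformations between $\PP^2$-bundles over $\PP^1$ are volume preserving only when their centres lie in log canonical centres of the pair, which you would have to arrange, not assume. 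The paper sidesteps all of these issues by keeping every birational link explicit, with baselocus placed inside log canonical centres by construction; if you want to pursue the abstract MMP route you would need a structure theorem (not available from the results cited in the paper) producing, for every coregularity $\leq 1$ pair, a crepant model fibred over $\PP^1$ in log Calabi--Yau surfaces whose boundary contains two fibres.
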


\paragraph{Outline of the proof.}
A log Calabi--Yau pair $(\PP^3,\Delta)$ has $\coreg(\PP^3,\Delta)\leq1$ if and only if $\Delta$ has a singularity which is strictly (semi-)log canonical. Thus to prove Theorem~\ref{thm!main-result}, we start by consulting the extensive literature on the classification of singular quartic surfaces (\cite{shah,umezu,umezu2,urabe,wall} etc.) and organise all such pairs into eleven different deformation families of pairs $(\PP^3,\Delta)$ depending on the singularities of $\Delta$. These are
\begin{description}
\item[(A.1-4)] the first four families described in Proposition~\ref{prop!type-A-quartics}, corresponding to irreducible quartic surfaces with a simple elliptic (or cusp) singularity,
\item[(B.1-3)] the first three families described in Proposition~\ref{prop!type-B-quartics}, corresponding to irreducible non-normal quartic surfaces,
\item[(C.1-4)] the four families described in \S\ref{sec!type-C-quartics}, corresponding to reducible quartic surfaces.
\end{description}  
We then construct ten explicit volume preserving maps (i)-(x), as shown in Figure~\ref{fig!flowchart}, which link the different families together. 
\begin{figure}[htbp]
\begin{center}
\begin{tikzpicture}[scale=1.4]
   \node (a) at (0,1) {(A.3)};
   \node (b) at (2,1) {(A.2)};
   \node (h) at (4,1) {(B.2)};
   \node (i) at (6,1) {(B.3)};
   \node (j) at (10,1) {(A.1)};
   
   \node (f) at (0,0) {(A.4)};
   \node (g) at (2,0) {(B.1)};
   \node (c) at (4,0) {(C.1)};
   \node (d) at (6,0) {(C.2)};
   \node (e) at (8,0) {(C.3)};
   \node (k) at (10,0) {(C.4)};
      
   \draw[->] (a) to node[above] {\small (i)} (b);
   \draw[->] (b) to node[above] {\small (ii)} (c);
   \draw[->] (f) to node[above] {\small (iii)} (g);
   \draw[->] (g) to node[above] {\small (iv)} (c);   
   \draw[->] (h) to node[right] {\small (v)} (c);   
   \draw[->] (c) to node[above] {\small (vi)} (d);
   \draw[->] (i) to node[right] {\small (vii)} (d);
   \draw[->] (d) to node[above] {\small (viii)} (e);
   \draw[->] (e) to node[above] {\small (ix)} (k);
   \draw[->] (j) to node[right] {\small (x)} (k);
\end{tikzpicture}
\caption{The volume preserving maps that link the eleven different families.} 
\label{fig!flowchart}
\end{center}
\end{figure}
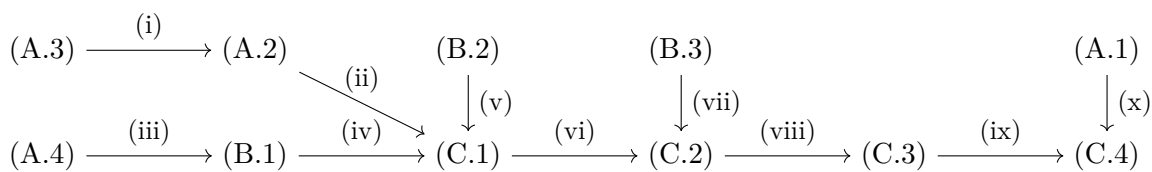

Ultimately this shows that every pair admits a volume preserving map onto a pair from the family (C.4) which, by definition (cf.\ \S\ref{sec!type-C-quartics}), consists of all pairs $(\PP^3,\Delta)$ whose boundary divisor $\Delta=D_1+D_2$ is the union of a plane $D_1$ and the cone over a plane cubic curve $D_2$. At this point the proof of Theorem~\ref{thm!main-result} follows easily (see \S\ref{sec!the-proof}).

\subsection{The two-dimensional cases} \label{sec!2d}

As a toy example, and because it also illustrates the basic process of our proof, we describe the 2-dimensional analogue of Theorem~\ref{thm!main-result}.

\paragraph{Classification of two-dimensional log Calabi--Yau pairs.}
If $(X,\Delta_X)$ is a two-dimensional log Calabi--Yau pair then, after replacing $(X,\Delta_X)$ by a minimal resolution of singularities and consulting the classification of surfaces, it follows that $(X,\Delta_X)$ is given by one of the following.
\begin{enumerate}
\item If $\coreg(X,\Delta_X)=2$ then $X$ is either an abelian surface or a K3 surface and $\Delta_X=0$.
\item If $\coreg(X,\Delta_X)=1$ then either
\begin{enumerate} 
\item $X$ is a rational surface and $\Delta_X\in|{-K_X}|$ is a smooth elliptic curve, or
\item $\pi\colon X \to E$ is a (not necessarily minimal) ruled surface over a smooth elliptic curve $E$, and $\Delta_X=D_1+D_2\in|{-K_X}|$ is the sum of two disjoint sections of $\pi$. 
\end{enumerate}
\item If $\coreg(X,\Delta_X)=0$ then $X$ is a rational surface and $\Delta_X\in|{-K_X}|$ is a (possibly reducible) reduced nodal curve of arithmetic genus 1.
\end{enumerate}
The maximal pairs are also known in the literature as \emph{Looijenga pairs} and they always have a toric model \cite[Proposition~1.3]{ghk}. Thus there is a single volume preserving equivalence class of two-dimensional maximal log Calabi--Yau pairs.

\begin{eg}\label{eg!P2}
The classification above shows that there are precisely four possibilities for a log Calabi--Yau pair of the form $(\PP^2,\Delta)$. We either have
\begin{enumerate}
\item $\coreg(\PP^2,\Delta)=1$, which holds if $\Delta$ is a smooth cubic curve, or
\item $\coreg(\PP^2,\Delta)=0$, which holds if either 
\begin{enumerate} 
\item $\Delta_a :=\Delta$ is an irreducible nodal cubic curve, 
\item $\Delta_b :=\Delta$ is the sum of a conic and (non-tangent) line, or 
\item $\Delta_c :=\Delta$ is a triangle of lines.
\end{enumerate}
\end{enumerate}
It follows from the existence of toric models that the three maximal cases are all volume preserving equivalent and, indeed, it is simple to construct explicit volume preserving maps that relate them. 

Recall that a quadratic transformation $\varphi\colon \PP^2\dashrightarrow \PP^2$ is determined by a linear system $|\sO_{\PP^2}(2)-p_1-p_2-p_3|$ of conics that pass through three non-collinear (but possibly infinitely near) basepoints $p_1,p_2,p_3\in\PP^2$. We can define a volume preserving quadratic transformations $(\PP^2,\Delta_a)\stackrel{\varphi_1}{\dashrightarrow}(\PP^2,\Delta_b)\stackrel{\varphi_2}{\dashrightarrow}(\PP^2,\Delta_c)$ by picking basepoints as illustrated in Figure~\ref{fig!P2} (which also shows the basepoints of $\varphi_1^{-1}$ and $\varphi_2^{-1}$). 
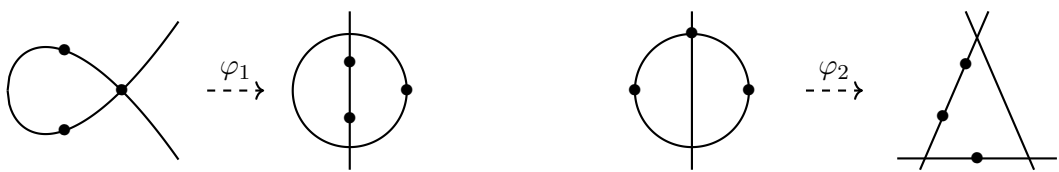
\begin{figure}[htbp]
\begin{center}
\begin{tikzpicture} [scale=1.5] 
    \draw [thick,  domain=0:1.5, samples=100] plot ({\x}, {sqrt(\x*(\x-1)^2)});
    \draw [thick,  domain=0:1.5, samples=100] plot ({\x}, {-sqrt(\x*(\x-1)^2)});
    \node at (1,0) {$\bullet$};
    \node at (1/2,{ sqrt(1/8)}) {$\bullet$};
    \node at (1/2,{-sqrt(1/8)}) {$\bullet$};
    
    \draw[thick, dashed,->] (1.75,0) to node[above]{$\varphi_1$} (2.25,0);
    
    \draw [thick,  domain=0:360, samples=360] plot ({(cos(\x)+1)/2+2.5}, {sin(\x)/2});
    \draw [thick] (1/2+2.5,-0.7) -- (1/2+2.5,0.7);
    \node at (3, 0.25) {$\bullet$};
    \node at (3,-0.25) {$\bullet$};
    \node at (3.5,0) {$\bullet$};
    
    \begin{scope}[xshift = 3cm]
    \draw [thick,  domain=0:360, samples=360] plot ({(cos(\x)+1)/2+2.5}, {sin(\x)/2});
    \draw [thick] (1/2+2.5,-0.7) -- (1/2+2.5,0.7);
    \node at (3, 0.5) {$\bullet$};
    \node at (2.5,0) {$\bullet$};
    \node at (3.5,0) {$\bullet$};
        
    \draw[thick, dashed,->] (4,0) to node[above]{$\varphi_2$} (4.5,0);
    
    \draw [thick] (-0.2+5,-0.6) -- (1.2+5,-0.6);
    \draw [thick] (0+5,-0.7) -- (0.6+5,0.7);
    \draw [thick] (1+5,-0.7) -- (0.4+5,0.7);
    \node at (5 + 0.2,-0.7+1.4/3) {$\bullet$};
    \node at (5 + 0.4,-0.7+2.8/3) {$\bullet$};
    \node at (5.5,-0.6) {$\bullet$};
    \end{scope}
\end{tikzpicture}
\caption{Volume preserving maps between the three maximal pairs of the form $(\PP^2,\Delta)$.}
\label{fig!P2}
\end{center}
\end{figure}

In other words, the basepoints of $\varphi_1$ are $p_1,p_2,p_3\in \Delta_a$, where $p_1$ is the node of $\Delta_a$ and $p_2,p_3$ are general points. Similarly, the basepoints of $\varphi_2$ are $p_1,p_2,p_3\in \Delta_b$, where $p_1$ is one of the nodes of $\Delta_b$ and $p_2,p_3$ are general points on the conic component of $\Delta_b$. In each case, for $\varphi_i$ to be volume preserving the basepoints are required to belong to $\Delta$, and in order to pull out a new irreducible component we let one of the basepoints coincide with a node of $\Delta$ (i.e.\ a minimal log canonical centre of $(\PP^2,\Delta)$).
\end{eg}

Our proof of Theorem~\ref{thm!main-result} proceeds in a similar (but more involved) manner. For a given pair $(\PP^3,\Delta)$, we find a collection of points and curves contained in the log canonical centres of $(\PP^3,\Delta)$ which form the baselocus for volume preserving map $\varphi\colon(\PP^3,\Delta)\dashrightarrow(\PP^3,\Delta')$ such that $\Delta'$ is `simpler' than $\Delta$ (where `simpler' is to be interpreted in accordance with the structure of the graph in Figure~\ref{fig!flowchart}).

\subsection{Relationship to other work}

\subsubsection{Characterising maximal pairs with a toric model}
Finding criteria which characterise maximal log Calabi--Yau pairs with a toric model is a difficult problem which originated in work of Shokurov. Theorem~\ref{thm!main-result}(2) is a special case of the following conjecture.

\begin{conj}\label{conj!toric-model}
Suppose that $(X,\Delta_X)$ is a maximal log Calabi--Yau pair and $X$ is a rational $3$-fold. Then $(X,\Delta_X)$ has a toric model.
\end{conj}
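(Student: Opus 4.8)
The plan is to attack Conjecture~\ref{conj!toric-model} by a volume preserving minimal model program together with induction on $\dim X$, taking the two-dimensional case — where every maximal pair is a Looijenga pair and hence has a toric model by Gross--Hacking--Keel \cite[Proposition~1.3]{ghk} — as the base of the induction. First I would replace $(X,\Delta_X)$ by a $\mathbb{Q}$-factorial dlt modification; this is volume preserving and preserves both $\coreg(X,\Delta_X)=0$ and the rationality of $X$, so after this reduction $\Delta_X$ is reduced and $\mathcal{D}(X,\Delta_X)$ is a triangulation of $\mathbb{S}^2$ by Remark~\ref{rmk!toric-models}(3). Since $X$ is rational it is uniruled, so $K_X$ is not pseudo-effective and one may run the $(K_X+(1-\varepsilon)\Delta_X)$-MMP for a small rational $\varepsilon>0$; because $K_X+\Delta_X\equiv 0$, every extremal ray $R$ contracted by this program has $(K_X+\Delta_X)\cdot R=0$ and so each step is volume preserving. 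The program terminates in a volume preserving model $\varphi\colon(X,\Delta_X)\dashrightarrow(X',\Delta')$ equipped with a Mori fibre space $\pi\colon X'\to S$ with $0\le\dim S\le 2$.

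The inductive step is to fibre the problem over $S$. When $\dim S\ge1$ the restriction of $(X',\Delta')$ to a general fibre $F$ of $\pi$ is a maximal log Calabi--Yau pair of strictly smaller dimension, and $F$ is rationally connected, hence rational; by induction it has a toric model. Dually, the components of $\Delta'$ dominating $S$ cut out a maximal log Calabi--Yau structure $(S,\Delta_S)$, again with a toric model. The aim is then to assemble the fibrewise and base toric charts into a single toric model for $(X',\Delta')$, matching them along the codimension-one strata so as to be compatible with the triangulation of the $\mathbb{S}^2$ dual complex — a structural analogue of the explicit boundary-modifying maps used to prove Theorem~\ref{thm!main-result}.

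I expect the genuinely hard part to be twofold. The first difficulty is the \emph{Fano case} $\dim S=0$, where $X'$ has Picard rank one and carries no fibration on which to induct; here one is forced to use the components of $\Delta'$ themselves to manufacture a toric degeneration or an auxiliary fibration, and it is precisely this case that has no two-dimensional shadow and in which any counterexample would hide. The second, and I think deeper, difficulty is the \emph{gluing step}: realising the abstract simplicial sphere $\mathcal{D}(X',\Delta')$ as the boundary complex of an honest fan — producing a candidate toric pair $(T,\Delta_T)$ — and then upgrading the combinatorial match to a genuine volume preserving birational map $X'\dashrightarrow T$ rather than a mere pl-homeomorphism of dual complexes. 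Controlling this patching of local toric charts into a global one is exactly where the rationality hypothesis must be used in an essential way — a non-rational maximal pair can never admit a toric model, so rationality must enter beyond the combinatorics — and this is the crux of the conjecture.
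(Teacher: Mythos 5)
You are attempting to prove Conjecture~\ref{conj!toric-model}, which the paper does \emph{not} prove: it is stated as an open problem (originating with Shokurov), and the paper explicitly proves only the very special case $X=\mathbb{P}^3$ with $\Delta$ a quartic surface (Theorem~\ref{thm!main-result}(2)). Moreover the paper's method for that case is entirely different from yours: a case-by-case classification of strictly slc quartics into eleven explicit families, joined by ten hand-built low-degree Cremona links (Figure~\ref{fig!flowchart}) terminating in the family (C.4) of (plane)$+$(cone over a cubic), which is then carried onto the toric boundary of $\mathbb{P}^1\times\mathbb{P}^2$. So your proposal cannot be judged as "the same approach as the paper"; it must stand alone as an attack on an open conjecture, and it does not close it. Indeed, the two steps you yourself defer --- the Picard-rank-one case and the upgrade from a pl-homeomorphism of dual complexes to an actual volume preserving map onto a toric pair --- are not technical residue; they \emph{are} the conjecture.

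There is also a concrete false step earlier, in the inductive skeleton itself. It is not true that a maximal pair restricts to a maximal pair on the general fibre of the Mori fibre space: the zero-dimensional lc centres can lie over special points of the base. For example, let $X\subset\mathbb{P}^1\times\mathbb{P}^3$ be a general divisor of bidegree $(1,3)$, a rational $3$-fold with a cubic del Pezzo fibration $\pi\colon X\to\mathbb{P}^1$ (and a Mori fibre space, since $\rho(X)=2$), and take $\Delta=F_0+D$ where $F_0=\pi^{-1}(0)$ and $D=X\cap(\mathbb{P}^1\times H)$ for a plane $H$ tangent to the smooth cubic surface $F_0$. Then $K_X+\Delta\sim 0$, and at the node $p$ of the curve $F_0\cap D$ the divisor $\Delta$ has a degenerate cusp singularity of type $T_{2\infty\infty}$ ($x^2+xyz=0$ in the paper's Table~\ref{table!log-canonical}); equivalently, inversion of adjunction applied to $(F_0,D|_{F_0})$, a cubic surface with a nodal anticanonical curve, shows $p$ is an lc centre. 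Hence $\coreg(X,\Delta)=0$. Yet the general fibre pair is a smooth cubic surface with a \emph{smooth} elliptic anticanonical curve: coregularity one, with no toric model, so your fibrewise induction has nothing to stand on. The same example undercuts the dual claim about the base: the elliptic fibres have varying $j$-invariant, so the moduli part of the canonical bundle formula is nontrivial and the horizontal components of $\Delta$ do not cut out an integral maximal log CY structure on $S$. Thus even granting your two admitted "hard parts," one would first need to show that \emph{some} volume preserving model carries a fibration whose general fibre pair is again maximal --- which is again essentially the conjecture. The only part of the proposal that is solid and standard is the first reduction: dlt modification followed by a $(K_X+(1-\varepsilon)\Delta_X)$-MMP, whose steps are volume preserving because $(K_X+\Delta_X)\cdot R=0$ on every contracted ray, yielding a Mori fibred volume preserving model as in \cite{ck}.
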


Unfortunately the simple and appealing statement of Conjecture~\ref{conj!toric-model} fails miserably as soon as one tries to relax any of the given assumptions.

\begin{rmk}\label{rmk!restrictions}
In light of the three consequences of Remark~\ref{rmk!toric-models}, we note that the following conditions in the statement of Conjecture~\ref{conj!toric-model} are essential.
\begin{enumerate}
\item It is necessary to assume that $X$ is rational, since there exist examples of non-rational maximal log Calabi--Yau 3-fold pairs constructed by Kaloghiros \cite{kal} and Svaldi \cite[Example~5]{kal}. (This is in contrast to the 2-dimensional setting, in which maximal pairs are always rational.)
\item It is necessary to assume that $\dim X=3$, since the examples of Kaloghiros can easily be used to produce a maximal log Calabi--Yau pair of the form $(\PP^4,\Delta)$ where $\Delta$ contains an irreducible component which is a non-rational quartic 3-fold. 
\item We cannot relax the condition $K_X+\Delta_X\sim 0$ in Definition~\ref{def!lCY} to include the case that $K_X+\Delta_X\sim_\QQ 0$, since it is easy to construct a 3-fold pair $(X,\Delta_X)$ with $2(K_X+\Delta_X)\sim 0$ but for which $\sD(X,\Delta_X)\simeq \RR\PP^2$. (For example, take the quotient of the toric variety $(\PP^1)^3$ by the involution acting by $(x,y,z)\mapsto(x^{-1},y^{-1},z^{-1})$ on the dense open torus $(\CC^\times)^3\subset (\PP^1)^3$.)
\end{enumerate}
\end{rmk}

Concerning Remark~\ref{rmk!restrictions}(3), we note that Filipazzi, Mauri \& Moraga \cite{coreg} have shown that a maximal pair $(X,\Delta_X)$ (in our context of Definition~\ref{def!lCY}) satisfies either $K_X+\Delta_X\sim 0$ or $2(K_X+\Delta_X)\sim 0$, and that these two possibilities are distinguished by the orientability of $\sD(X,\Delta_X)$. Moreover, Remark~\ref{rmk!toric-models}(3) will never provide an obstacle to Conjecture~\ref{conj!toric-model} since $\sD(X,\Delta_X)\simeq \mathbb S^2$ for maximal 3-fold pairs $(X,\Delta_X)$ with $K_X+\Delta_X\sim 0$ by \cite[\S33]{kx}.

\subsubsection{Cremona equivalence of rational quartic surfaces with a plane}
Mella \cite{mella} has shown that every rational quartic surface $\Delta\subset \PP^3$ is Cremona equivalent to a hyperplane $H\subset\PP^3$. That is to say that there exists a birational map $\varphi \colon \PP^3\dashrightarrow \PP^3$ which maps $\Delta$ birationally onto $H$. Our theorem strengthens this result of Mella (at least in the case that $(\PP^3,\Delta)$ is log canonical) by showing that a rational quartic $\Delta\subset \PP^3$ can be mapped onto a hyperplane by a volume preserving map for the pair $(\PP^3,\Delta)$. Most of the maps that Mella constructs do not extend to volume preserving maps of $(\PP^3,\Delta)$, and thus we need to proceed rather more carefully. Roughly speaking, in order for $\varphi$ to be volume preserving we need to ensure that the $k$-dimensional components of the baselocus of $\varphi$ are contained in $(k+1)$-dimensional log canonical centres of $(\PP^3,\Delta)$.

\subsubsection{Volume preserving subgroups of $\Bir(\PP^n)$}

A log Calabi--Yau pair $(\PP^n,\Delta)$ (up to volume preserving equivalence) determines a subgroup $\Bir^{\text{vp}}(\PP^n,\Delta)\subseteq \Bir(\PP^n)$ (up to conjugation), where $\Bir^{\text{vp}}(\PP^n,\Delta)$ is the subgroup consisting of volume preserving birational self-maps of $(\PP^n,\Delta)$. It is an interesting question to know how big (or small) this subgroup can be, depending on the geometry of $(\PP^n,\Delta)$, and whether one can describe a set of maps that generate it.

A complete picture is known in the case of $\PP^2$. For the pairs $(\PP^2,\Delta)$ of coregularity one, any map $\varphi\colon (\PP^2,\Delta)\dashrightarrow (\PP^2,\Delta)$ induces a birational map $\varphi|_\Delta\colon \Delta\dashrightarrow \Delta$ which is necessarily an isomorphism. Thus $\Bir^{\text{vp}}(\PP^2,\Delta)$ coincides with the \emph{decomposition group} of the smooth plane cubic curve $\Delta$, which has been studied by Pan \cite{pan}. For the pairs of coregularity zero, Blanc \cite{blanc} has given a very explicit description of the group $\Bir^{\text{vp}}(\PP^2,\Delta)$ when $\Delta=\VV(xyz)$ is the triangle of coordinate lines.  

In dimension 3, Araujo, Corti \& Massarenti \cite{acm} consider the case of a very general quartic surface $\Delta\subset\PP^3$  (in particular, $\Delta$ is smooth and has Picard rank 1), and show that $\Bir^{\text{vp}}(\PP^3,\Delta)$ consists only of those automorphisms of $\PP^3$ that preserve $\Delta$. Moreover, they also give an explicit description of $\Bir^{\text{vp}}(\PP^3,\Delta)$ in the case that $\Delta$ is a general quartic surface with a single ordinary double point.

\subsubsection{Pairs $(\PP^3,\Delta)$ of coregularity two} 
Theorem~\ref{thm!main-result} only treats the case of pairs $(\PP^3,\Delta)$ of coregularity at most one. The remaining case $\coreg(\PP^3,\Delta)=2$ occurs if and only if $\Delta$ is an irreducible quartic surface with at worst Du Val singularities. 
Aside from the results of \cite{acm} mentioned above, giving an explicit classification of all such pairs up to volume preserving equivalence will be difficult, and significantly more involved than simply classifying quartic surfaces up to birational equivalence. For example, Oguiso \cite{oguiso} has given an example of two smooth isomorphic quartic surfaces $\Delta_1,\Delta_2\subset \PP^3$ for which there is no map $\varphi\in \Bir(\PP^3)$ (let alone a volume preserving one) that maps $\Delta_1$ birationally onto $\Delta_2$.

\subsection{Notation}

We use $\dP_d$ to denote a del Pezzo surface of degree $d$, possibly with Du Val singularities. We often need to consider curves which are either smooth elliptic curves, or reduced nodal curves of arithmetic genus 1. Since repeating this each time we want to use it is a bit of a mouthful we call such a curve an \emph{ordinary curve}.

\subsection{Acknowledgements}

I would like to thank Anne-Sophie Kaloghiros for some very helpful correspondence and comments on the topic of this paper.

\section{Log Calabi--Yau pairs}

We begin with some useful results concerning the geometry of log Calabi--Yau pairs. 

\begin{defn}\label{def!lCY}
A log Calabi--Yau pair $(X,\Delta_X)$ is a log canonical pair consisting of a proper variety $X$ over $\CC$ and a reduced effective integral Weil divisor\footnote{More generally, it is sometimes assumed that $\Delta_X$ has $\QQ$-coefficients and that $K_X+\Delta_X\sim_\QQ 0$ is \emph{only $\QQ$-linearly trivial}, but we will always assume that $\Delta_X$ is integral (cf. Remark~\ref{rmk!restrictions}(3)).} 
$\Delta_X$ such that $K_X+\Delta_X\sim 0$. 
\end{defn}

A global section of $H^0(X,K_X+\Delta_X)\cong \CC$ defines a meromorphic volume form $\omega_{\Delta_X}$ on $X$ with $\operatorname{div}(\omega_{\Delta_X})=\Delta_X$, and which is uniquely determined up to scalar multiplication. 

\subsection{Volume preserving maps}

The natural notion of birational equivalence between log Calabi--Yau pairs is that of volume preserving equivalence (cf.\ \cite[Definition~2.23]{kollar}).

\begin{defn} \label{def!vp}
A proper birational morphism of pairs $f\colon (Z,\Delta_Z) \to (X,\Delta_X)$ is called \emph{crepant} if 
$f_*(\Delta_Z)=\Delta_X$ and $f^*(K_X+\Delta_X) \sim K_Z+\Delta_Z$. A birational map of pairs $\varphi\colon (Y,\Delta_Y) \dashrightarrow (X,\Delta_X)$ is called \emph{crepant} if it admits a resolution of the form
\begin{equation}
\begin{tikzcd}
 & (Z,\Delta_Z) \arrow[rd, "g"] & \\
(Y,\Delta_Y) \arrow[leftarrow, ru, "f"] \arrow[rr, dashed, "\varphi"] & & (X,\Delta_X)
\end{tikzcd}
\end{equation}
where $f$ and $g$ are crepant birational morphisms. 
\end{defn}

In the context of log Calabi--Yau pairs $(X,\Delta_X)$ and $(Y,\Delta_Y)$, crepant birational maps are also known as \emph{volume preserving maps},\footnote{We use this terminology despite the following potential for confusion: the condition for $\varphi\colon(\PP^3,\Delta_1)\dashrightarrow (\PP^3,\Delta_2)$ to be volume preserving depends on the choice of $\Delta_1,\Delta_2\subset\PP^3$. It is \emph{not} necessarily the case that $\varphi^*\omega_{\Delta_2}= \omega_{\Delta_2}$.}  since $\varphi^*\omega_{\Delta_X}=\omega_{\Delta_Y}$ for an appropriate rescaling of the naturally defined volume form on each side \cite[Remark 5]{ck}.


\begin{rmk} 
An easy consequence of the definition is that a volume preserving map preserves discrepancies, i.e.\ that $a_E(X,\Delta_X)=a_E(Y,\Delta_Y)$ for any exceptional divisor $E$ over both $X$ and $Y$, where $a_E(X,\Delta_X)\in\QQ$ denotes the discrepancy of $E$ over $(X,\Delta_X)$. Moreover a composition of volume preserving maps is volume preserving.
\end{rmk}

\subsection{Dlt modifications}

The main problem with considering log canonical pairs $(X,\Delta_X)$ in general is that they can exhibit rather complicated singularities. Life becomes easier if we focus on pairs with \emph{divisorial log terminal} (dlt) singularities. This is always possible by passing to a dlt modification.

\begin{prop}[\cite{ck} Theorem 7]
Given a log Calabi--Yau pair $(X,\Delta_X)$, there exists a volume preserving map $\varphi\colon (\widetilde X,\Delta_{\widetilde X}) \to (X,\Delta_X)$ where $(\widetilde X,\Delta_{\widetilde X})$ is a $\QQ$-factorial dlt pair and $\widetilde X$ has at worst terminal singularities.
\end{prop}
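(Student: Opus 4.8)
The plan is to construct $\widetilde X$ as an explicit birational modification of $X$ and to verify only that the resulting morphism is crepant, since a crepant morphism between log Calabi--Yau pairs is automatically volume preserving: if $f\colon \widetilde X\to X$ satisfies $f_*\Delta_{\widetilde X}=\Delta_X$ and is crepant, then $K_{\widetilde X}+\Delta_{\widetilde X}=f^*(K_X+\Delta_X)\sim 0$, so $f$ is a volume preserving map in the sense of Definition~\ref{def!vp} (take $Z=\widetilde X$). The problem therefore reduces to producing a $\QQ$-factorial dlt pair $(\widetilde X,\Delta_{\widetilde X})$ with $\widetilde X$ terminal, together with a crepant birational morphism $f\colon \widetilde X\to X$.

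The feature of Definition~\ref{def!lCY} that I would exploit is that \emph{all discrepancies are integers}. Choosing any log resolution $\pi\colon Z\to X$, the divisor $\pi^*(K_X+\Delta_X)$ is Cartier because $K_X+\Delta_X\sim 0$, and $K_Z$ is Cartier because $Z$ is smooth; hence the crepant boundary
\[ \Delta_Z=\pi^*(K_X+\Delta_X)-K_Z \]
has integral coefficients. Since these coefficients are $-a_E(X,\Delta_X)$ and log canonicity gives $a_E\geq -1$, every divisor $E$ over $X$ satisfies $a_E(X,\Delta_X)\in\{-1,0,1,2,\dots\}$. Two consequences follow at once. First, away from $\Delta_X$ the pair is $(X,0)$ with $K_X\sim 0$, so $X$ is Gorenstein and log terminal there, hence canonical; in particular every log canonical centre of $(X,\Delta_X)$ lies inside $\Delta_X$. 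Second, this integrality is precisely the place where the hypotheses (integral $\Delta_X$, honest linear triviality) are used and cannot be weakened (cf.\ Remark~\ref{rmk!restrictions}(3)).

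Next I would take the standard $\QQ$-factorial dlt modification $f_0\colon(\widetilde X_0,\Delta_0)\to(X,\Delta_X)$, obtained by running a relative minimal model program over $X$ starting from the log smooth pair $(Z,\pi_*^{-1}\Delta_X+\sum_{a_E=-1}E)$ so as to contract the divisors of positive discrepancy; its existence and the termination of this program are furnished by the minimal model program of Birkar--Cascini--Hacon--McKernan. This $f_0$ is crepant, and $\widetilde X_0$ is $\QQ$-factorial and dlt, with the log canonical places of $(X,\Delta_X)$ as its exceptional divisors. Using the integrality above I would then observe that $\widetilde X_0$ is in fact a \emph{canonical} variety: for any divisor $F$ over $\widetilde X_0$ one has $a_F(\widetilde X_0,0)=a_F(X,\Delta_X)+\ord_F(\Delta_0)\geq 0$, because either $F$ lies over $\Delta_0$, so that $\ord_F(\Delta_0)\geq 1$ offsets $a_F\geq -1$, or else $F$ lies over the canonical locus identified above and already $a_F(X,\Delta_X)\geq 0$.

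The main obstacle is upgrading $\widetilde X_0$ to a terminal total space without losing crepancy, and this is exactly where canonicity is decisive. I would pass to a $\QQ$-factorial terminalisation $g\colon\widetilde X\to\widetilde X_0$, which exists again by Birkar--Cascini--Hacon--McKernan and which extracts only divisors $F$ with $a_F(\widetilde X_0,0)=0$ (there are no negative ones, since $\widetilde X_0$ is canonical). Each such $F$ has integral pair discrepancy $a_F(\widetilde X_0,\Delta_0)=-\ord_F(\Delta_0)\in\{0,-1\}$, so assigning it the boundary coefficient $\ord_F(\Delta_0)\in\{0,1\}$ makes $g$ crepant over the pair; composing with $f_0$ yields the required crepant morphism $f\colon\widetilde X\to X$ with $\widetilde X$ terminal. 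The two points that demand genuine care are the termination of the two minimal model programs (both supplied by Birkar--Cascini--Hacon--McKernan) and the verification that $(\widetilde X,\Delta_{\widetilde X})$ remains dlt; for the latter I would check that $g$ extracts only divisors that are crepant over $\widetilde X_0$ and centred off the dlt strata, or log canonical places added compatibly to the boundary, so that the snc-in-codimension-two structure of the dlt modification is preserved.
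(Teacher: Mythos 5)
The paper itself gives no proof of this proposition: it is quoted directly from \cite[Theorem~7]{ck}, and your overall architecture --- crepant dlt modification produced by a relative MMP, integrality of all discrepancies, canonicity of the resulting total space, then a crepant $\QQ$-factorial terminalisation whose exceptional divisors receive boundary coefficient $\ord_F(\Delta_0)\in\{0,1\}$ --- is essentially the argument of the cited source. The reduction to crepant morphisms, the integrality observation, and the final coefficient computation are all correct.

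There is, however, one genuinely false step, and it is the one carrying the key claim that $\widetilde X_0$ is canonical. You assert that away from $\Delta_X$ the pair is Gorenstein and \emph{log terminal}, hence canonical, so that every log canonical centre of $(X,\Delta_X)$ lies inside $\Delta_X$. Log canonical plus Gorenstein does not imply log terminal: take $X\subset\PP^3$ to be the projective cone over a smooth plane cubic and $\Delta_X\in|{-K_X}|=|\sO_X(1)|$ a general hyperplane section, a smooth elliptic curve missing the vertex. Then $K_X+\Delta_X\sim 0$ and the pair is log canonical, so this is a log Calabi--Yau pair in the sense of Definition~\ref{def!lCY}; but the exceptional elliptic curve of the blowup of the vertex has discrepancy $-1$, so the vertex is a log canonical centre of $(X,\Delta_X)$ disjoint from $\Delta_X$, and $X$ is Gorenstein yet strictly log canonical (not canonical) there --- exactly the kind of geometry this paper studies, cf.\ the cones of family (C.4). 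The repair is standard and costs nothing: run your dichotomy on the dlt pair $(\widetilde X_0,\Delta_0)$ rather than on $(X,\Delta_X)$. If $a_F(\widetilde X_0,\Delta_0)=-1$ then, because the pair is dlt, the centre of $F$ is a stratum of $\Delta_0$ and the pair is simple normal crossing at the generic point of that stratum (\cite[Theorem~4.16]{kollar}), so the multiplicity of $F$ in the pullback of $\Delta_0$ is at least $1$; if instead the centre of $F$ is not contained in $\Delta_0$, then $F$ is not a log canonical place, so $a_F(\widetilde X_0,\Delta_0)\geq 0$ by integrality. Either way $a_F(\widetilde X_0,0)\geq 0$, with no claim about $X$ itself needed. (The same point repairs your blanket assertion that $F$ lying over $\Delta_0$ has pullback multiplicity $\geq 1$: since $\Delta_0$ is only $\QQ$-Cartier this can fail, but only for divisors with $a_F\geq 0$, where it is not needed.) With this substitution, your terminalisation step and the verification of dlt-ness of the final pair --- the terminalisation extracts only divisors centred in $\mathrm{Sing}(\widetilde X_0)$, hence is an isomorphism over the simple normal crossing locus --- go through as you sketch them.
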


One of the most pleasing consequences of working with a dlt pair $(X,\Delta_X)$ is that it is easy to understand the log canonical centres of $(X,\Delta_X)$ and they satisfy some very pleasing forms of adjunction.

\begin{thm}[cf.\ \cite{kollar} Theorems 4.6, 4.16 \& 4.19]
If $(X,\Delta_X)$ is a dlt log Calabi--Yau pair and $\Delta_X=\sum_{i=1}^k D_i$ then 
\begin{enumerate}
\item the log canonical centres of $(X,\Delta_X)$ are precisely the irreducible components of $D_J := \bigcap_{j\in J}D_j$ for any subset $J\subseteq \{1,\ldots,k\}$ (where $D_\emptyset=X$),
\item every such log canonical centre is normal and has pure codimension $\#J$,
\item for any log canonical centre $Z\subset X$ there is a naturally defined\footnote{In favourable situations, for example if $Z=D_1\cap\cdots\cap D_k$ is an intersection of Cartier divisors in $X$, then $\Delta_{Z} := (\Delta_X - D_1-\cdots-D_k)|_{Z}$ is obtained by repeated application of the adjunction formula. In general it is given by a variant of the \emph{different} $\Delta_{Z} := \Diff^*_{Z}(\Delta_X)$.} divisor class $\Delta_{Z}$ on $Z$ such that $(Z,\Delta_{Z})$ is a dlt log Calabi--Yau pair,
\item if $\varphi\colon (X,\Delta_X)\dashrightarrow(Y,\Delta_Y)$ is a volume preserving map which restricts to a birational map of log canonical centres $\varphi|_{Z_X}\colon (Z_X,\Delta_{Z_X})\dashrightarrow(Z_Y,\Delta_{Z_Y})$, then $\varphi|_{Z_X}$ is also volume preserving.
\end{enumerate}
\end{thm}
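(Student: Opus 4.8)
The plan is to prove the four statements about dlt log Calabi--Yau pairs, of which only the final item (4) is genuinely specific to the Calabi--Yau setting; items (1)--(3) are essentially citations to the standard dlt adjunction package. Since the paper cites \cite{kollar} Theorems 4.6, 4.16 \& 4.19 for these, the honest approach is to recall why these results apply here and then concentrate effort on (4). For (1) and (2), the point is that a $\QQ$-factorial dlt pair $(X,\Delta_X)$ with $\Delta_X=\sum D_i$ reduced has the property that any intersection $D_J=\bigcap_{j\in J}D_j$ is either empty or of pure codimension $\#J$, with normal irreducible components, and these are exactly the log canonical centres; this is the content of the cited theorems and I would simply invoke them. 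For (3), I would recall that repeated adjunction (or the different $\Diff^*_Z$ in the non-Cartier case) produces a boundary $\Delta_Z$ on each normal centre $Z$ with $K_Z+\Delta_Z\sim (K_X+\Delta_X)|_Z$, and since $K_X+\Delta_X\sim 0$ this forces $K_Z+\Delta_Z\sim 0$, so $(Z,\Delta_Z)$ is again log Calabi--Yau and is dlt by inversion of adjunction.

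The substance is item (4). First I would set up the crepant resolution guaranteed by Definition~\ref{def!vp}: choose $(W,\Delta_W)$ with crepant morphisms $f\colon W\to X$ and $g\colon W\to Y$ resolving $\varphi$. The key observation is that crepant morphisms respect log canonical centres: if $Z_X\subseteq X$ is a log canonical centre, then because discrepancies are preserved, $f$ carries the log canonical centres of $(W,\Delta_W)$ lying over $Z_X$ onto $Z_X$, and likewise for $g$ and $Z_Y$. Under the hypothesis that $\varphi$ restricts to a birational map $\varphi|_{Z_X}\colon Z_X\dashrightarrow Z_Y$, I would locate a common log canonical centre $Z_W\subseteq W$ that dominates both $Z_X$ and $Z_Y$ birationally via $f$ and $g$ respectively, so that the restrictions $f|_{Z_W}$ and $g|_{Z_W}$ resolve $\varphi|_{Z_X}$.

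The heart of the argument is then to show that the induced morphisms $f|_{Z_W}\colon (Z_W,\Delta_{Z_W})\to(Z_X,\Delta_{Z_X})$ and $g|_{Z_W}\colon (Z_W,\Delta_{Z_W})\to(Z_Y,\Delta_{Z_Y})$ are themselves crepant, where the boundaries $\Delta_{Z_W},\Delta_{Z_X},\Delta_{Z_Y}$ are the ones furnished by item (3). This is exactly the statement that adjunction commutes with crepant pullback: restricting the crepant relation $f^*(K_X+\Delta_X)\sim K_W+\Delta_W$ to the centre $Z_W$ and comparing with the differents computed on $Z_X$ and $Z_W$ should yield $(f|_{Z_W})^*(K_{Z_X}+\Delta_{Z_X})\sim K_{Z_W}+\Delta_{Z_W}$, together with the pushforward condition $(f|_{Z_W})_*\Delta_{Z_W}=\Delta_{Z_X}$. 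I would prove this by invoking the compatibility of the different with crepant birational morphisms (the divisorial adjunction formulas in \cite{kollar} are built to be functorial in precisely this way), checking pushforward of boundaries componentwise. Granting crepancy of $f|_{Z_W}$ and $g|_{Z_W}$, the diagram $Z_X\xleftarrow{f|_{Z_W}}Z_W\xrightarrow{g|_{Z_W}}Z_Y$ is a crepant resolution of $\varphi|_{Z_X}$, so $\varphi|_{Z_X}$ is volume preserving by definition.

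The main obstacle I anticipate is the careful bookkeeping in the previous paragraph: verifying that the different $\Delta_Z$ transforms correctly under $f$ and $g$ requires that the log canonical centre $Z_W$ really does map birationally (not merely dominantly with positive-dimensional fibres) onto both $Z_X$ and $Z_Y$, and that no extra discrepancy is introduced along $Z_W$. One has to be sure that the hypothesis ``$\varphi$ restricts to a birational map of the centres'' guarantees the existence of such a common centre $Z_W$ in $W$ rather than merely a correspondence; this is where the dlt hypothesis and the fact that crepant maps preserve the stratification by log canonical centres do the real work. Once the functoriality of adjunction under crepant morphisms is in hand, the rest of the argument is formal.
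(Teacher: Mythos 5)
The paper itself gives no proof of this statement: it is quoted as known, with the references to \cite{kollar} (Theorems 4.6, 4.16, 4.19) standing in for the argument. Your treatment of items (1)--(3) therefore coincides with what the paper does (a citation, though note that the dlt-ness of $(Z,\Delta_Z)$ in (3) is part of the cited adjunction theorem itself, not a consequence of ``inversion of adjunction'', which runs in the opposite direction). The only substantive comparison is with your sketch of item (4), and there the skeleton is right --- resolve $\varphi$ by crepant morphisms $f,g$ from $(W,\Delta_W)$, restrict to a centre upstairs, invoke functoriality of adjunction --- but it has a genuine gap at precisely the step you flag and then wave away: the existence of a log canonical centre $Z_W\subset W$ mapping birationally onto $Z_X$. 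Preservation of discrepancies only gives that images of lc centres of $(W,\Delta_W)$ are lc centres of $(X,\Delta_X)$ and that $Z_X$ is such an image; it does not rule out that every centre of $(W,\Delta_W)$ dominating $Z_X$ does so with positive-dimensional fibres. (Note also that $\Delta_W$ is in general only a sub-boundary, with negative coefficients on some exceptional divisors, so ``lc centre of $(W,\Delta_W)$'' must be read as ``stratum of the coefficient-one part''.) Saying that ``the dlt hypothesis \ldots do[es] the real work'' is a placeholder for the actual mathematics, which is missing.

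What fills the gap is the following: since $(X,\Delta_X)$ is dlt, the pair is snc at the generic point $\eta$ of $Z_X$, and over an snc germ every divisorial valuation of log discrepancy zero centred over $\eta$ is monomial in the snc coordinates; hence on a log smooth crepant model the coefficient-one part of $\Delta_W$ is toroidal over $\eta$, and the strata corresponding to maximal cones of the associated fan map birationally onto $Z_X$ (this is the structure theory underlying \cite{kx}). Moreover, once one such $Z_W$ is found you do not need to ``locate a common centre'' for $Y$ separately: $\varphi$ is defined at $\eta$ with $\varphi(\eta)=\eta_{Z_Y}$ by hypothesis, so by separatedness $g|_{Z_W}=\varphi|_{Z_X}\circ f|_{Z_W}$ as rational maps, and $g|_{Z_W}$ is automatically birational onto $Z_Y$; after that your appeal to compatibility of the different with crepant pullback is a legitimate citation to \cite{kollar} and closes the argument. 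Alternatively --- and closer in spirit to how the paper verifies its own maps (x) and (ix) are volume preserving --- one can bypass $Z_W$ entirely: at the snc generic points the iterated Poincar\'e residues of $\omega_{\Delta_X}$ and $\omega_{\Delta_Y}$ are exactly the volume forms of $(Z_X,\Delta_{Z_X})$ and $(Z_Y,\Delta_{Z_Y})$, residues are compatible with crepant pullback, and $\varphi^*\omega_{\Delta_Y}=c\,\omega_{\Delta_X}$ by \cite[Remark~5]{ck}; hence $(\varphi|_{Z_X})^*\omega_{\Delta_{Z_Y}}$ is a scalar multiple of $\omega_{\Delta_{Z_X}}$, which is exactly the volume preserving condition for the log Calabi--Yau pairs on the centres.
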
 

Thus the boundary divisor $\Delta_X$ of a dlt log Calabi--Yau pair can be thought of as a collection of log Calabi--Yau pairs of dimension $d-1$, glued together along their boundary components. One can make a similar study of log canonical log Calabi--Yau pairs, but in general the picture is significantly more complicated (see \cite[\S4]{kollar} for details). 

\subsection{The coregularity}
Since volume preserving maps preserve discrepancies, they map log canonical centres onto log canonical centres. In particular, one can use this to show that the dimension of a minimal log canonical centre on a dlt modification is a volume preserving invariant\footnote{This is rather crude invariant in general, since more is true: any two minimal log canonical centres $Z,Z'\subset X$ of a log Calabi--Yau pair $(X,\Delta_X)$ are birational to one another \cite[Theorem~4.40]{kollar} and, in fact, the volume preserving equivalence class of $(Z,\Delta_Z)$ is an even finer invariant.} of $(X,\Delta_X)$. 

\begin{defn}\label{def!coreg}
The \emph{coregularity} $\coreg(X,\Delta_X)$ is defined to be the dimension of a minimal log canonical centre in a dlt modification $\varphi\colon (\widetilde X,\Delta_{\widetilde X})\to (X,\Delta_X)$. A log Calabi--Yau pair $(X,\Delta_X)$ is called \emph{maximal} if $\coreg(X,\Delta_X)=0$.
\end{defn}


Given a log canonical centre $Z\subset X$ of a dlt pair $(X,\Delta_X)$ then $\coreg(X,\Delta_X)=\coreg(Z,\Delta_Z)$, since any smaller log canonical centre $Z'\subset Z\subset X$ restricts to a log canonical centre of $(Z,\Delta_Z)$ by \cite[Theorem~4.19(3)]{kollar}.

\subsection{The dual complex $\mathcal{D}(X,\Delta_X)$}

Although we will not use it, we briefly recall the \emph{dual complex} $\mathcal{D}(X,\Delta_X)$ of a log Calabi--Yau pair $(X,\Delta_X)$ since it was mentioned in the introduction. This is a simplicial complex which encodes the geometry of the log canonical centres of $(X,\Delta_X)$ obtained by associating a $(k-1)$-dimensional simplex $\sigma_Z$ to each $k$-codimensional log canonical centre $Z\subsetneq X$, which are then glued together according to inclusion. Thus $\mathcal{D}(X,\Delta_X)$ has dimension $\dim \mathcal{D}(X,\Delta_X)=\dim X - \coreg(X,\Delta_X) - 1$, and this is of maximal possible dimension if $\coreg(X,\Delta_X)=0$ (which is one explanation for the terminology `maximal pair'). A key theorem of Koll\'ar \& Xu relates volume preserving maps of pairs to homeomorphisms of their dual complexes.

\begin{thm}[\cite{kx} Theorem 13]  \label{thm!vp-dual-cx}
A volume preserving map $\varphi\colon (X,\Delta_X)\to (Y,\Delta_Y)$ induces a piecewise linear homeomorphism of dual complexes $\varphi_*\colon \mathcal{D}(X,\Delta_X)\to \mathcal{D}(Y,\Delta_Y)$ .
\end{thm}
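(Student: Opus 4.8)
The plan is to reduce the statement to a comparison of $\mathbb{Q}$-factorial dlt models and then to track the dual complex through the elementary steps of a relative minimal model program. First I would lift $\varphi$ to the dlt level: since dlt modifications are themselves volume preserving and volume preserving maps compose, $\varphi$ induces a volume preserving map $\widetilde{\varphi}\colon(\widetilde X,\Delta_{\widetilde X})\dashrightarrow(\widetilde Y,\Delta_{\widetilde Y})$ between $\mathbb{Q}$-factorial dlt log Calabi--Yau pairs, and by definition $\mathcal D(X,\Delta_X)$ and $\mathcal D(Y,\Delta_Y)$ are computed from these models. By the adjunction description of log canonical centres for dlt pairs, each such $\mathcal D$ is an honest complex whose $(k-1)$-simplices are indexed by the codimension-$k$ centres $D_J$, glued by inclusion. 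The theorem is therefore equivalent to the assertion that this combinatorial object is an invariant of the crepant birational class, and in particular is independent of the chosen dlt modification.

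The heart of the argument is to connect $(\widetilde X,\Delta_{\widetilde X})$ and $(\widetilde Y,\Delta_{\widetilde Y})$ by a chain of elementary crepant maps. I would take a common log resolution $W$ dominating both models; since both morphisms are crepant, $K_W+\Delta_W$ pulls back from a common divisor and is relatively numerically trivial over each. Running the relative MMP then expresses the passage from one model to the other as a finite composition of crepant steps, each of which is either (a) a divisorial contraction of an exceptional divisor $E$ of discrepancy $a_E>0$, or (b) a flop, i.e.\ a crepant map that is an isomorphism in codimension one. This connectedness of crepant dlt models is the input I would draw from the MMP, and it is the step I expect to be the main obstacle, since it is exactly what forces the dual complex to be well defined.

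It then remains to check that each elementary step changes the dual complex only by a PL homeomorphism. For a contraction of type (a), the divisor $E$ satisfies $a_E>0$, so it is neither a component of the boundary nor any log canonical centre; contracting it induces a bijection between the log canonical centres of the two models that preserves all incidences, and hence an isomorphism of dual complexes. For a flop of type (b), each boundary component $D_i$ and each intersection $D_J$ has a strict transform differing only in codimension $\geq 1$ inside the relevant stratum, so the indexing set of simplices and their face relations are unchanged; after passing to a common subdivision one obtains a genuine PL homeomorphism. Finally I would verify that the PL structures are compatible---they arise from the integral affine (valuative and multiplicity) data attached to the centres---so that the individual homeomorphisms compose to the desired map $\varphi_*$. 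The delicate point in this last step is the bookkeeping for flops: one must ensure that strata modified in codimension two neither create nor destroy simplices, which is precisely why the conclusion is only a PL homeomorphism rather than an equality of complexes.
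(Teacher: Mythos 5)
First, a point of comparison: the paper does not prove this statement at all --- it is quoted directly from Koll\'ar--Xu \cite{kx}, and the technical work lives there and in de Fernex--Koll\'ar--Xu's work on dual complexes of singularities. Your overall skeleton (pass to $\QQ$-factorial dlt models, connect them by a relative MMP whose steps are automatically crepant because $K+\Delta$ is numerically trivial, and check each elementary step) does mirror that proof, so the question is whether your verification of the elementary steps holds up. For divisorial contractions it essentially does, except that the correct threshold is $a_E>-1$, not $a_E>0$: what matters is that the contracted divisor is not a log canonical place, and divisors with discrepancy in $(-1,0]$ must also be contracted and likewise do not appear in the dual complex.

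The genuine gap is the flop step. Your claim that under a small crepant map ``the indexing set of simplices and their face relations are unchanged'' is false, and the justification you give --- that strict transforms of strata differ only in codimension $\geq 1$ inside each stratum --- is also false, because a small map of the ambient $3$-fold need not restrict to a small map of a boundary surface $D_i$: a flopping curve has codimension one in $D_i$, and it can itself be a stratum. The standard counterexample is a toric Atiyah flop: locally $X_1$ and $X_2$ are the two small resolutions of the cone over a quadric surface, with toric boundary $D_1+D_2+D_3+D_4$; on $X_1$ the flopping curve is the stratum $D_1\cap D_3$ (and $D_2\cap D_4=\emptyset$), while on $X_2$ it is $D_2\cap D_4$ that is a stratum and $D_1\cap D_3=\emptyset$. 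The two dual complexes are the square on vertices $1,2,3,4$ triangulated by the diagonal $\{1,3\}$, respectively by the diagonal $\{2,4\}$: the same underlying PL space, but genuinely different simplicial complexes --- a $1$-simplex is destroyed and a different one created, and the triple-point vertices change accordingly. So your closing remark has it exactly backwards: if no simplices were created or destroyed one would obtain a simplicial isomorphism; the reason the theorem asserts only a PL homeomorphism is that flops do change the simplicial structure, and the real content of the cited argument is to show that the underlying space nevertheless survives --- for instance by showing that the simplices of $\mathcal{D}(X_i,\Delta_i)$ lying over a fixed log canonical centre of the common contraction target $Z$ assemble into a PL cell depending only on $(Z,\Delta_Z)$, so that both complexes are identified with one polyhedral complex built from the base. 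Without an argument of this kind the proof does not close.
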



\section{A rough classification of quartic surfaces} \label{sect!quartics}

We now recall some results on the classification of quartic surfaces. Quartic surfaces can have one of many thousands of different singularity types \cite{degtyarev}, but they have been well-studied and the study of the classification of singular quartic surfaces goes back to Jessop \cite{jess}. Moreover, since then other authors have also given very precise descriptions of the type of singularities that a quartic surface can have, e.g.\ \cite[Corollary 2.3 \& Theorem 2.4]{shah}.

We divide log Calabi--Yau pairs $(\PP^3,\Delta)$ of coregularity $\leq1$ into eleven different families according to the singularities of $\Delta$, as described in \S\ref{sec!main-result}. Each family is taken to be closed under degeneration and they are not supposed to be mutually exclusive. Moreover, every such pair belongs to one of these eleven families. Clearly every reducible quartic surface $\Delta$ is either the union of a plane and cubic surface (C.1) or two quadrics (C.2), or a degeneration of one of these two cases. The fact that every log canonical pair with irreducible boundary divisor belongs to one of the other seven families follows from Proposition~\ref{prop!type-A-quartics} and Proposition~\ref{prop!type-B-quartics}.

\subsection{Two-dimensional semi-log canonical singularities}
In Table~\ref{table!log-canonical} we present the classification of two-dimensional strictly (semi-)log canonical hypersurface singularities $\VV(f(x,y,z))\subset\Aa^3_{x,y,z}$, up to local analytic isomorphism, cf.\ \cite{slc}. 
\begin{table}[htp]
\caption{Two-dimensional strictly semi-log canonical hypersurface singularities}
\begin{center} 
\def\arraystretch{1.3}
\begin{tabular}{|c|cc|cc|} \hline
Isolated & Type & Name & Normal form for $f$ & Condition  \\ \hline
\multirow{4}{*}{Yes} & \multirow{3}{*}{Simple elliptic} & $\widetilde E_6$ (or $T_{333}$) & $\lambda xyz = x^3 + y^3 + z^3$ & $\lambda^3\neq 27$ \\ 
& & $\widetilde E_7$ (or $T_{244}$) & $\lambda xyz = x^2 + y^4 + z^4$ & $\lambda^4\neq  64$  \\ 
& & $\widetilde E_8$ (or $T_{236}$) & $\lambda xyz = x^2 + y^3 + z^6$ & $\lambda^6\neq 432$  \\ \cline{2-5}
& Cusp & $T_{pqr}$ & $xyz = x^p + y^q + z^r$ & $\tfrac{1}{p}+\tfrac{1}{q}+\tfrac{1}{r}<1$   \\  \hline
\multirow{5}{*}{No} & Normal crossing & $A_\infty$ & $xy = 0$ &   \\
& Pinch point & $D_\infty$ & $x^2 + y^2z = 0$ & \\  \cline{2-5}
& \multirow{3}{*}{Degenerate cusp} & $T_{\infty\infty\infty}$ & $xyz = 0$ & \\ 
&  & $T_{p\infty\infty}$ & $xyz = x^p$ & $p\geq2$  \\ 
&  & $T_{pq\infty}$ & $xyz = x^p + y^q$ & $\tfrac{1}{p}+\tfrac{1}{q}<1$ \\ \hline
\end{tabular}
\end{center}
\label{table!log-canonical}
\end{table}%
When we refer to $p\in \Delta$ as an `\emph{$\widetilde E_k$ singularity}' we implicitly take that to include the possibility that $p\in \Delta$ is a degeneration of an $\widetilde E_k$ singularity. For example, the cusp singularities $T_{pqr}$ with $3\leq p,q,r\leq \infty$ are $\widetilde E_6$ singularities, the cusp singularities $T_{2qr}$ with $4\leq q,r\leq \infty$ are $\widetilde E_7$ singularities, and the cusp singularities $T_{23r}$ with $6\leq r\leq \infty$ are $\widetilde E_8$ singularities.

\paragraph{Coregularity of $(\PP^3,\Delta)$.}
Now let $(X,\Delta_X)$ be a 3-fold pair and $p\in \Delta_X\subset X$ is a point at which $X$ is smooth, but $\Delta_X$ has an $\widetilde E_6$, $\widetilde E_7$ or $\widetilde E_8$ singularity. We consider the weighted blowup 
\[ \pi\colon \left(E\cong\PP(a,b,c)\subset \widetilde X\right) \to (p\in X), \] 
with weights $\{a,b,c\}=\{1,1,1\}$, $\{2,1,1\}$ or $\{3,2,1\}$ given to the local coordinates $x,y,z$ for the normal form presented in Table~\ref{table!log-canonical}, so that $\deg f(x,y,z)\geq3$, $4$ or $6$ in each case respectively. In all cases $E$ is a log canonical centre of $(X,\Delta_X)$ and, setting $\Delta_{\widetilde X} := f^{-1}_*\Delta_X + E$, the map $\pi\colon (\widetilde X,\Delta_{\widetilde X})\to (X,\Delta_X)$ is volume preserving. Moreover the curve $\Gamma := E\cap f^{-1}_*\Delta_X$ is also a log canonical centre of $(X,\Delta_X)$, and is a smooth elliptic curve if $p\in \Delta_X$ is a simple elliptic singularity, and a reduced nodal curve otherwise. 

In particular if $\coreg(\PP^3,\Delta)=1$ then $\Delta$ can only have either simple elliptic singularities or a double curve with a finite number of pinch points. Similarly, $\coreg(\PP^3,\Delta)=0$ if and only if $\Delta$ has a cusp singularity or a degenerate cusp singularity.

\subsection{Irreducible quartic surfaces with isolated singularities} 

There are four distinct ways in which an irreducible quartic surface can have an isolated strictly log canonical singularity. Singularities of type $\widetilde E_6$ and $\widetilde E_7$ each appear in an essentially unique way, but singularities of type $\widetilde E_8$ can appear in one of two different ways (cf.\ \cite{jess, noether, shah, wall}). 

\begin{prop}\label{prop!type-A-quartics}
Suppose that $\Delta=\VV(F(x,y,z,t))\subset \PP^3$ is a reduced irreducible quartic surface such that $\Delta$ has at least one isolated simple elliptic (or cusp) singularity and possibly some additional Du Val singularities. Then, up to projective equivalence, one of the following occurs.
\begin{enumerate}
\item (\cite[Theorem~8.1(iii)]{wall}) $\Delta$ is a rational surface with exactly one such singularity $p\in \Delta$. The type of singularity $p\in \Delta$ and the form of the equation $F(x,y,z,t)$ are given by one of the following four cases.
\begin{description}
\item[(A.1)] type $\widetilde E_6$: $\deg F\geq3$ with respect to the weights $(0,1,1,1)$ for $(t,x,y,z)$,
\item[(A.2)] type $\widetilde E_7$: $\deg F\geq4$ with respect to the weights $(0,1,1,2)$ for $(t,x,y,z)$,
\item[(A.3)] type $\widetilde E_8$: $\deg F\geq6$ with respect to the weights $(0,1,2,3)$ for $(t,x,y,z)$, or
\item[(A.4)] type $\widetilde E_8$: $\deg F\geq6$ with respect to the weights $(0,1,2,2,3)$ for $(t,x,y,z,tz+x^2)$.
\end{description}
\item (\cite[Theorems 1 \& 2]{umezu2}) $\Delta$ is an elliptic ruled surface with exactly two such singularities $p_1,p_2\in \Delta$ which are necessarily simple elliptic singularities of the same type. The type of singularities $p_1,p_2\in \Delta$ and the form of the equation $F(x,y,z,t)$ are given by one of the following two cases.
\begin{description}
\item[(A.2*)] $2\times\widetilde E_7$: $\deg F\geq4$ with respect to weights $(0,1,1,2)$ and $(2,1,1,0)$ for $(t,x,y,z)$, or
\item[(A.3*)] $2\times\widetilde E_8$: $\deg F\geq6$ with respect to weights $(0,1,2,3)$ and $(3,2,1,0)$ for $(t,x,y,z)$.
\end{description}
\end{enumerate}
Moreover, in order for $p\in\Delta$ to be log canonical, in each case the appropriate weighted tangent cone must define the cone over an ordinary curve.
\end{prop}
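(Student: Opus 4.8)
The plan is to treat the enumerations in parts (1) and (2) as the content of the cited theorems of Wall \cite[Theorem~8.1(iii)]{wall} and Umezu \cite[Theorems~1~\&~2]{umezu2}, which already assert that an irreducible quartic carrying an isolated strictly log canonical singularity has, up to projective equivalence, either exactly one such point (with $\Delta$ rational) or exactly two (with $\Delta$ elliptic ruled), together with the list of admissible singularity types. The remaining work is therefore (a) to translate these classifications into the weighted-degree normal forms for $F$, and (b) to prove the final log canonicity criterion. I would start by moving an isolated singular point $p$ to the coordinate point $[1:0:0:0]$ and passing to the affine chart $t=1$, so that $f(x,y,z)=F(1,x,y,z)$; the analytic type of $p\in\Delta$ is then read off from the low-order terms of $f$ and matched against the normal forms of Table~\ref{table!log-canonical}.

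For each isolated type I would exhibit the weight system on $(x,y,z)$ for which the model equation of the singularity is weighted homogeneous of minimal degree, namely $\{1,1,1\}$ with $\deg\geq3$ for $\widetilde E_6$, $\{2,1,1\}$ with $\deg\geq4$ for $\widetilde E_7$, and $\{3,2,1\}$ with $\deg\geq6$ for $\widetilde E_8$ --- precisely the weights of the blowup $\pi$ introduced just before the statement. Homogenising with $t$ of weight $0$ produces the weightings $(0,1,1,1)$, $(0,1,1,2)$ and $(0,1,2,3)$ of (A.1)--(A.3), and the lower bound on $\deg F$ is exactly the requirement that the weighted tangent cone (the lowest weighted-degree part of $F$) already carries the full local equation of the listed singularity, any Du Val corrections being pushed into strictly higher weighted-degree terms. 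The two-point families (A.2*) and (A.3*) arise in the same way by placing $p_1$ at $[1:0:0:0]$ and $p_2$ at $[0:0:0:1]$: demanding the corresponding $\widetilde E_7$ or $\widetilde E_8$ type at both points forces $F$ to have the stated minimal weighted degree for the two opposite weightings simultaneously, which is a strong enough constraint to recover Umezu's normal forms.

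The delicate case is (A.4), the second way in which an $\widetilde E_8$ point can sit on a quartic. Here $z$ carries weight $2$ rather than $3$, and the missing weight-$3$ generator must be supplied by the combination $w:=tz+x^2$, which is not weighted homogeneous for the weighting of $(t,x,y,z)$; the $\widetilde E_8$ structure of $\Delta$ at $p$ becomes visible only after promoting $w$ to an auxiliary coordinate of weight $3$ and blowing up with weights $(1,2,2,3)$ on $(x,y,z,w)$. I expect this to be the main obstacle: one has to verify that this second embedding genuinely yields an $\widetilde E_8$ singularity, that it is not projectively equivalent to (A.3), and that (A.3) and (A.4) between them exhaust the $\widetilde E_8$ possibilities. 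This is the point at which I would rely most directly on the explicit analyses in \cite{wall,shah}.

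For the moreover clause the argument is uniform across all cases. The weighted blowup $\pi\colon(\widetilde X,\Delta_{\widetilde X})\to(X,\Delta_X)$ described before the statement is volume preserving and extracts the log canonical centre $E\cong\PP(a,b,c)$, on which the different furnishes a boundary $\Delta_E$ making $(E,\Delta_E)$ a dlt log Calabi--Yau pair, by the adjunction package for dlt pairs \cite{kollar}. The curve $\Gamma=E\cap f^{-1}_*\Delta_X$ is cut out on $E$ precisely by the weighted tangent cone. By inversion of adjunction, $(X,\Delta_X)$ is log canonical at $p$ if and only if $(E,\Delta_E)$ is log canonical, and since $\Gamma$ is a one-dimensional log canonical centre this holds exactly when $\Gamma$ is a log Calabi--Yau boundary on $E$, i.e.\ an ordinary curve. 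Thus $p\in\Delta$ is log canonical if and only if the weighted tangent cone defines the cone over an ordinary curve, as required.
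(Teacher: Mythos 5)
Your proposal is correct and takes essentially the same route as the paper: there the proposition is likewise not proved from scratch but quoted from Wall and Umezu, with the weighted normal forms matched against Table~\ref{table!log-canonical}, and the subtle second $\widetilde E_8$ family (A.4) deferred to the explicit analyses of \cite{wall,shah}. The ``moreover'' clause is justified in the paper exactly as you do it, via the volume preserving weighted blowup $\pi$ described immediately before the statement, under which $E$ and $\Gamma=E\cap f^{-1}_*\Delta_X$ are log canonical centres and log canonicity at $p$ reduces to $\Gamma$ being an ordinary curve.
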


We treat cases (A.2*) and (A.3*) as degenerate cases of (A.2) and (A.3). 

\begin{rmk}
The two different types of $\widetilde E_8$ singularity can be distinguished by the fact that the quartics in family (A.3) contain a line $\VV(y,z)$ which through $P\in \Delta$, whereas the generic member of family (A.4) does not contain any line. From the point of view of GIT stability, that quartics in family (A.3) are unstable but those in (A.4) are stable \cite{shah,wall}.
\end{rmk}


\paragraph{Parameterisation of the rational cases.}
The study of rational quartics with these four types of simple elliptic singularities goes back to Noether \cite{noether}. The rational parameterisation of quartics with a triple point (A.1) is straightforward. In each of the other cases (A.2-4) Noether produced a rational parameterisation which we now describe.

First note that $(\Delta,0)$ is a log Calabi--Yau pair by adjunction, and we consider the volume preserving minimal resolution of log Calabi--Yau pairs $\mu\colon (\widetilde \Delta,D)\to (\Delta,0)$ where $D\subset \widetilde \Delta$ the reduced exceptional curve (or reduced exceptional cycle) over the simple elliptic (or cusp) singularity $p\in \Delta$. By the classification of two-dimensional log Calabi--Yau pairs \S\ref{sec!2d}, $\widetilde \Delta$ is a rational surface. Let $(\widetilde \Delta_0, D_0) := (\widetilde \Delta, D)$ and, for $i=1,\ldots,k$, let $f_i\colon(\widetilde \Delta_{i-1},D_{i-1})\to(\widetilde \Delta_{i},D_{i})$ be a sequence of volume preserving blowdowns (i.e.\ obtained by setting $D_i:=f_i(D_{i-1})$) ending with a minimal surface $\widetilde \Delta_k$. 
\begin{equation} \label{eq!delta-res}
\begin{tikzcd}
(\Delta,0) \arrow[leftarrow, r, "\mu"] & (\widetilde \Delta,D) =: (\widetilde \Delta_0,D_0) \arrow[r, "f_1"] & (\widetilde \Delta_1,D_1) \arrow[r, "f_2"] & \cdots \arrow[r, "f_k"] & (\widetilde \Delta_k,D_k)  
\end{tikzcd}
\end{equation}
By choosing a sequence of contractions carefully it is possible to show that we can always find a sequence ending with $\widetilde \Delta_k=\PP^2$ and therefore $D_k\subset\PP^2$ is either a smooth or nodal cubic curve. Let $f\colon \widetilde \Delta\to \PP^2$ be the composition of the $f_i$ and consider $\NS(\widetilde \Delta) = \ZZ\langle h, e_1,\ldots e_k\rangle$ given with its standard basis, where $h=f^*\sO_{\PP^2}(1)$ and $e_i$ is the total transform of class of the exceptional divisor of $f_i$. Then we have $D\sim 3h-e_1-\ldots-e_k$ and the map $\mu\colon\widetilde \Delta\to \Delta\subset \PP^3$ is induced by a nef divisor class $A=\mu^*\sO_{\Delta}(1)$ satisfying $h^0(\widetilde\Delta,A)=4$, $A^2=4$ and $A\cdot D=0$. The possibilities for $A$ are given in Table~\ref{table!type-A-quartics}.

\begin{table}[htp]
\begin{center} 
\caption{Irreducible rational quartic surfaces with an isolated log canonical singularity.} 
\label{table!type-A-quartics}
\resizebox{!}{!}{
\def\arraystretch{1.5}
\begin{tabular}{|c|ccc|c|} \hline
Case & Singularity & $\widetilde \Delta$ & $A$ & Noether \cite{noether} \\ \hline
(A.1) & $\widetilde E_6$ & $\Bl_{12}\PP^2$ & $4h-\sum\limits_{i=1}^{12} e_i$ &  \\
(A.2) & $\widetilde E_7$ & $\Bl_{11}\PP^2$ & $6h-\sum\limits_{i=1}^7 2e_i-\sum\limits_{i=8}^{11}e_i$ & $F_4^{(1)}$  \\
(A.3) & $\widetilde E_8$ & $\Bl_{10}\PP^2$ & $9h-\sum\limits_{i=1}^{8}3e_i-2e_9-e_{10}$ & $F_4^{(3)}$  \\
(A.4) & $\widetilde E_8$ & $\Bl_{10}\PP^2$ & $7h-3e_1-\sum\limits_{i=2}^{10}2e_i$ & $F_4^{(2)}$  \\ \hline 
\end{tabular}}
\end{center}
\end{table}%

\paragraph{The ruled elliptic cases.}
The remaining ruled elliptic cases (A.2*) and (A.3*) were described by Umezu~\cite[Theorems 1\& 2]{umezu2}. She gives a similar construction of them, by taking a minimal volume preserving resolution of singularities and blowing down to a minimal ruled elliptic surface.

\subsection{Irreducible quartic surfaces with non-isolated singularities} \label{sec!type-B-quartics}

The classification of non-normal quartic surfaces is contained in Jessop's book \cite{jess}, but it has also been considered in more modern times by Urabe \cite{urabe}. We follow Urabe's treatment and his subdivision into eight classes. Only two families in Urabe's classification have a general member which has worse than semi-log canonical singularities: (I) corresponding to the cone over a plane quartic curve, and (II-2) corresponding to a ruled elliptic surface with a line of cuspidal singularities. We recall the remaining cases. 

\begin{prop}[\cite{urabe}]\label{prop!type-B-quartics}
Suppose that $\Delta\subset \PP^3$ is a reduced irreducible non-normal quartic surface with semi-log canonical singularities. Then $\Delta$ has double points along a curve $\Sigma\subset \Delta$ and possibly some Du Val singularities outside of $\Sigma$. Moreover $\Sigma$ is (possibly a degeneration of) one of the following cases.
\begin{enumerate}
\item $\Delta$ is a rational surface and the curve $\Sigma$ is
\begin{description}
\item[(B.1)] a line \cite[(III-C)]{urabe},
\item[(B.2)] a plane conic \cite[(III-B)]{urabe},
\item[(B.3)] a twisted cubic \cite[(III-A-2) \& (III-A-3)]{urabe},
\item[(B.4)] the union of three concurrent lines \cite[(III-A-1)]{urabe}, or
\end{description}
\item $\Delta$ is a elliptic ruled surface and the curve $\Sigma$ is 
\begin{description}
\item[(B.1*)] a pair of skew lines \cite[(II-1)]{urabe}.
\end{description}
\end{enumerate}
\end{prop}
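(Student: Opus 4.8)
The plan is to recover Urabe's list intrinsically, using the conductor/normalisation formalism together with the classification of two-dimensional log Calabi--Yau pairs from \S\ref{sec!2d}, rather than quoting \cite{urabe} directly. Since $\Delta$ is non-normal with semi-log canonical singularities, Table~\ref{table!log-canonical} shows that the non-normal locus is a curve $\Sigma\subset\Delta$ along which $\Delta$ is generically normal crossing ($A_\infty$), with at worst finitely many pinch points ($D_\infty$); any remaining singularities are isolated and Du Val. Let $\nu\colon\widetilde\Delta\to\Delta$ be the normalisation and $\widetilde\Sigma\subset\widetilde\Delta$ the conductor (the reduced preimage of $\Sigma$), so that $\nu|_{\widetilde\Sigma}\colon\widetilde\Sigma\to\Sigma$ is a double cover branched exactly at the pinch points. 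By adjunction $K_\Delta\sim 0$, and the conductor formula gives $K_{\widetilde\Delta}+\widetilde\Sigma\sim\nu^*K_\Delta\sim 0$, so $(\widetilde\Delta,\widetilde\Sigma)$ is a two-dimensional log Calabi--Yau pair with boundary $\widetilde\Sigma\sim -K_{\widetilde\Delta}\neq 0$ (non-normality forces $\widetilde\Sigma\neq0$, hence $\coreg\leq 1$). The classification of \S\ref{sec!2d} then leaves exactly two possibilities: either $\widetilde\Delta$ is rational and $\widetilde\Sigma$ is an ordinary curve, or $\widetilde\Delta$ is ruled over an elliptic curve and $\widetilde\Sigma$ is a pair of disjoint elliptic sections.

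Next I would pin down $\deg\Sigma$ numerically. Put $A:=\nu^*\sO_\Delta(1)$, so $A^2=\deg\Delta=4$ and $A\cdot\widetilde\Sigma=2\deg\Sigma$ (as $\widetilde\Sigma\to\Sigma$ has degree $2$). Riemann--Roch, using $-K_{\widetilde\Delta}=\widetilde\Sigma$, gives $\chi(A)=\chi(\sO_{\widetilde\Delta})+\tfrac12 A\cdot(A+\widetilde\Sigma)=\chi(\sO_{\widetilde\Delta})+2+\deg\Sigma$. Since $A$ is nef and big and $K_{\widetilde\Delta}+\widetilde\Sigma\sim 0$, Kawamata--Viehweg vanishing yields $h^i(A)=0$ for $i>0$, whence $h^0(A)=\chi(\sO_{\widetilde\Delta})+2+\deg\Sigma$. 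The image of $\widetilde\Delta$ under $|A|$ is a nondegenerate surface of degree $4$ in $\PP^{h^0(A)-1}$; as a nondegenerate surface satisfies $\deg\geq\codim+1$, we get $h^0(A)\leq 6$. In the rational case ($\chi(\sO_{\widetilde\Delta})=1$) this gives $\deg\Sigma\leq 3$. In the elliptic ruled case ($\chi(\sO_{\widetilde\Delta})=0$) one uses the stronger fact that every nondegenerate surface of degree $4$ is rational (it is of minimal or del Pezzo degree), so an irregular $\widetilde\Delta$ cannot be nondegenerate in any $\PP^{\geq4}$; hence $h^0(A)=4$ and $\deg\Sigma=2$.

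It remains to identify $\Sigma$ geometrically, which is the crux. The governing constraints are that the homogeneous ideal $I_\Sigma$ must contain an irreducible quartic in $I_\Sigma^{2}$ (so that $\Delta$ is irreducible and doubled along $\Sigma$), and that the double cover $\widetilde\Sigma\to\Sigma$ realising $\nu$ must reproduce the ordinary-curve (respectively, two-section) structure above. In degree $1$ this forces a line, giving (B.1). In degree $2$ one gets an irreducible conic (B.2), while a pair of skew lines arises precisely in the elliptic ruled case, each section mapping $2\!:\!1$ onto a line with four pinch points, giving (B.1*). In degree $3$ one obtains a twisted cubic (B.3) or its degeneration to three concurrent lines (B.4). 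Spurious configurations are eliminated by the irreducibility test: for instance a plane cubic $C\subset H$ is excluded because every degree-$4$ element of $I_C^2$ is divisible by the plane $H$, and the remaining degenerate degree-$2$ and degree-$3$ curves either fail this test or collapse $\widetilde\Sigma$ below arithmetic genus $1$.

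The \emph{main obstacle} is this final step: separating the genuine families from their degenerations and checking that each listed $\Sigma$ actually supports an irreducible quartic of the correct type. Concretely, one must verify that the degree-$2$ cover $\widetilde\Sigma\to\Sigma$ can be realised with the Riemann--Hurwitz-correct number of pinch points making $\widetilde\Sigma$ an ordinary curve, and distinguish (B.3) from (B.4) according to whether $\Sigma$ is irreducible (coregularity $1$, with $\widetilde\Sigma$ a smooth elliptic curve) or a totally degenerate cycle (coregularity $0$, with $\widetilde\Sigma$ nodal of arithmetic genus $1$). This is a finite but delicate enumeration, so in practice it is cleanest to cross-check the intrinsic analysis above against Urabe's explicit normal forms in \cite{urabe}.
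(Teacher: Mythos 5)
First, a point of comparison: the paper does not prove this proposition at all --- it is quoted directly from Urabe's classification of non-normal quartic surfaces, with the two non-slc classes (I) and (II-2) discarded. Your proposal attempts something strictly more ambitious, namely an intrinsic rederivation, and the overall strategy (normalisation plus conductor, the two-dimensional log Calabi--Yau classification of \S\ref{sec!2d}, then a degree bound via Riemann--Roch) is a reasonable one. However, it contains a genuine error at exactly the step that handles the irregular case. The claim that ``every nondegenerate surface of degree $4$ is rational'' is false: the cone over an elliptic normal quartic curve $E\subset\PP^3\subset\PP^4$ is an irreducible nondegenerate surface of degree $4$ in $\PP^4$ which is birational to $E\times\PP^1$, hence irrational. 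This is not a removable technicality, because this cone genuinely arises as the normalisation of a quartic surface: projecting it from a general point of $\PP^4$ yields a quartic cone in $\PP^3$ (a cone over a plane quartic of geometric genus one, i.e.\ Urabe's excluded class (I)), non-normal along two lines through the vertex. Excluding it requires an argument of a different nature from the one you give: the singular locus of a cone is a union of rulings through the vertex, so the conductor necessarily passes through the simple elliptic singularity, and then the discrepancy of the exceptional elliptic curve drops below $-1$, violating the slc hypothesis.

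A second, related gap is that you apply the classification of \S\ref{sec!2d}, Riemann--Roch, and the dichotomy $\chi(\sO_{\widetilde\Delta})\in\{0,1\}$ to the normalisation itself, whereas \S\ref{sec!2d} describes the \emph{minimal resolution}; a priori the normalisation of an slc surface may have strictly log canonical, non--Du Val, isolated singularities away from the conductor. For the elliptic cone above one has $\chi(\sO)=1$, not $0$, so both your regular/irregular dichotomy and your Riemann--Roch count break down precisely on the problematic example. In the same vein, your opening assertion that ``any remaining singularities are isolated and Du Val'' is itself part of what the proposition claims: Table~\ref{table!log-canonical} allows isolated simple elliptic and cusp points, and ruling these out on a non-normal $\Delta$ needs an argument (in the rational case, connectedness of anticanonical cycles on rational surfaces forbids the conductor and an exceptional elliptic curve from forming disjoint boundary components; in the irregular case one is forced back into the cone situation just discussed). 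Finally, the last step --- deciding which curves $\Sigma$ of degree at most $3$ actually support an irreducible doubled quartic with the right gluing --- is the real content of Urabe's theorem, and you explicitly defer it to \cite{urabe}; since the paper itself simply cites Urabe this deferral is defensible, but it means the proposal as written is a partial consistency check of Urabe's list rather than an independent proof, and its two genuinely new reduction steps are the ones that fail.
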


\begin{rmk}
As one may see, cases (B.4) and (B.1*) do not appear in the list of families considered in \S\ref{sec!main-result}. The generic member of family (B.4) is isomorphic to Steiner's Roman surface. Given that this surface necessarily has a triple point at the intersection of the three lines, we treat it as a special case of (A.1). We treat case (B.1*) as a special case of (B.1). 
\end{rmk}


\paragraph{Parameterisation of the rational cases.}
Urabe also provides an explicit construction for all cases of his classification, which is analogous to the results of Noether and Umezu discussed in isolated singularity case above. We recall the description for the rational cases.

The normalisation $\nu\colon (\overline\Delta,\overline D) \to (\Delta,0)$ is a volume preserving map of log Calabi--Yau pairs, where $\overline D=\nu^{-1}(\Sigma)$ is the preimage of the double curve. Let $\mu\colon(\widetilde\Delta, D)\to(\overline \Delta,\overline D)\to(\Delta,0)$ be the volume preserving minimal resolution of singularities which factors through $\nu$. As in \eqref{eq!delta-res} above, we consider a sequence of volume preserving blowdowns $f=f_k\circ\cdots\circ f_1$ from $(\widetilde \Delta_0,D_0):=(\widetilde \Delta,D)$ to a minimal pair $(\widetilde \Delta_k,D_k)$ and we keep the same notation for $\NS(\widetilde\Delta)=\ZZ\langle h,e_1,\ldots, e_k\rangle$ and the divisor class $A=\mu^*\sO_\Delta(1)$.

For the case (B.1), Urabe shows that it is always possible to find a sequence of nine contractions ending in $\widetilde D_9=\PP^2$, and for which $A=4h-2e_1-\sum_{i=2}^9e_i$. In the three remaining cases, we have that $\mu^*|\sO_\Delta(1)|\subsetneq|A|$ is a strict linear subsystem of $|A|$. In these cases the normalisation $\overline\Delta$ can be realised as $\nu\colon (\overline\Delta\subset\PP^{h^0(A)-1})\dashrightarrow (\Delta\subset \PP^3)$, where $\nu$ is the projection from a general linear subspace of $\PP^{h^0(A)-1}$ which is disjoint from $\overline\Delta$. In particular, one of the following cases occurs.
\begin{description}
\item[(B.2)] $\overline\Delta \cong \Bl_5\PP^2\subset\PP^4$ is an anticanonically embedded $\dP_4$ and $\widetilde \Delta\to\overline\Delta$ is at worst the crepant resolution of some Du Val singularities.
\item[(B.3)] $\widetilde\Delta=\overline\Delta\subset\PP^5$ is either $\PP^1\times\PP^1$ embedded by $\sO_{\PP^1\times\PP^1}(2,1)$, or $\FF_2$ embedded by $|s+f|$ where $s$ is a (positive) section of $\FF_2$ and $f$ is the class of a fibre. By blowing up one more point on $\widetilde \Delta$ we can treat these both as one case, where $\widetilde \Delta=\Bl_2\PP^2$ is a $\dP_7$. (The difference between the two cases is then whether these two points are infinitely near or not.)
\item[(B.4)] $\widetilde\Delta=\overline\Delta\cong\PP^2\subset\PP^5$ is embedded by $\sO_{\PP^2}(2)$ (i.e.\ the second Veronese embedding of $\PP^2$).
\end{description}
We summarise these results in Table~\ref{table!type-B-quartics}.

\begin{table}[htp]
\begin{center} 
\caption{Irreducible rational quartic surfaces with a double curve $\Sigma\subset \Delta$.}
\label{table!type-B-quartics}
\resizebox{\textwidth}{!}{
\def\arraystretch{1.5}
\begin{tabular}{|c|cccc|c|} \hline
Case & Type of $\Sigma$ & $\widetilde \Delta$ & $A$ & $h^0(A)$ & Urabe \cite{urabe} \\ \hline
(B.1) & Line 					& $\Bl_9\PP^2$ & $4h - 2e_1-\sum\limits_{i=2}^{9}e_i$ & 4 & (III-C) \\
(B.2) & Conic 				& $\Bl_5\PP^2$ &  $3h - \sum\limits_{i=1}^{5}e_i$ &  5 & (III-B) \\ 
(B.3) & Twisted cubic 		& $\Bl_2\PP^2$ &  $3h-2e_1-e_2$ &  6 & (III-A-2/3) \\ 
(B.4) & Three concurrent lines	& $\PP^2$ &  $2h$ &  6 & (III-A-1) \\ \hline
\end{tabular}}
\end{center}
\end{table}%

\subsection{Reducible quartic surfaces}
\label{sec!type-C-quartics}

The remaining families correspond to pairs with a reducible boundary divisor and we divide them up into the following four families. The subdivision into these four particular cases may look somewhat artificial or arbitrary. Our only reason for considering it is that it corresponds to the logical structure of our proof of Theorem~\ref{thm!main-result}.
\begin{description}
\item[(C.1)] $\Delta$ is the union of a plane and a cubic surface,
\item[(C.2)] $\Delta$ is the union of two quadrics,
\item[(C.3)] $\Delta$ is the union of a plane and a singular cubic surface,
\item[(C.4)] $\Delta$ is the union of a plane and the cone over a cubic curve.
\end{description}


\section{Low degree maps in $\Bir(\PP^3)$}

%

The \emph{bidegree} of a 3-dimensional birational map $\varphi\in\Bir(\PP^3)$ is given by $(\deg\varphi,\,\deg\varphi^{-1})\in\ZZ_{\geq1}^2$. Maps with low bidegree are well-understood and there are some very detailed classification results. For example, Pan, Ronga \& Vust \cite{prv} show that quadratic maps can have bidegree $(2,d)$ for $d=2,3,4$, and these three types of map comprise three irreducible families $\mathcal F_{(2,d)}$ of dimensions $29,28,26$ respectively. Moreover they give a complete description of the strata of $\mathcal F_{(2,d)}$, by exhibiting all possible ways in which the baselocus $\Bs(\varphi)$ can degenerate. Deserti \& Han \cite{dh} provide a similar analysis for a large part of the landscape of maps of degree 3.

\subsection{Strategy} \label{sec!strategy}
In this section we exhibit a few examples of maps $\varphi\in\Bir(\PP^3)$ of low bidegree that we will use to construct some of the links between in our families in \S\ref{sec!proof}. For each map $\varphi$ we construct a resolution of the following form. 
\begin{equation*}
\begin{tikzcd}
 & X \arrow[rd, "\psi'"] & \\
\PP^3 \arrow[leftarrow, ru, "\psi"] \arrow[rr, dashed, "\varphi"] & & \PP^3
\end{tikzcd}
\end{equation*}
Then, to simplify a given pair $(\PP^3,\Delta)$, we will find boundary divisors $\Delta_X\subset X$ and $\Delta'\subset\PP^3$ such that $(\PP^3,\Delta')$ belongs to a simpler family (according to Figure~\ref{fig!flowchart}), and
\begin{equation}\label{eq!vp}
\begin{tikzcd}
 & (X,\Delta_X) \arrow[rd, "\psi'"] & \\
(\PP^3,\Delta) \arrow[leftarrow, ru, "\psi"] \arrow[rr, dashed, "\varphi"] & & (\PP^3,\Delta')
\end{tikzcd}
\end{equation}
is a diagram of volume preserving maps of Calabi--Yau pairs, as in Definition~\ref{def!vp}.

Throughout the following calculations we let $H$ denote the hyperplane class on the lefthand copy of $\PP^3$ (i.e.\ the domain of $\varphi$), and $H'$ the hyperplane class on the righthand copy (i.e.\ the range of $\varphi$). By abuse of notation we refer to the strict transform of a subvariety (whenever it makes sense) by the same name as for the original.


\subsection{The generic map of bidegree $(2,2)$} \label{sec!bidegree22}

The generic map $\varphi \colon \PP^3\dashrightarrow \PP^3$ of bidegree $(2,2)$ is defined by $|2H-C-p|$, the linear system of quadrics passing through a plane conic $C\subset\PP^3$ and a general point $p\in \PP^3$. Let $E'$ be the plane containing $C$ and let $F'$ be the quadric cone through $C$ with vertex at $p$. The map $\varphi$ is resolved by a symmetric diagram of the form
\[ \PP^3 \stackrel{\psi}{\longleftarrow} X \stackrel{\psi'}{\longrightarrow} \PP^3 \] 
where 
\begin{enumerate}
\item $\psi$ blows up $p$ with exceptional divisor $E\cong\PP^2$ and $C$ with exceptional divisor $F\cong\FF_2$,
\item $\psi'$ contracts $E'\subset X$ onto a point $p'\in \PP^3$ and $F'\subset X$ onto a conic $C'\subset \PP^3$.
\end{enumerate}
It follows that $-K_X \sim 4H-2E-F \sim 4H'-2E'-F'$ and we have the following relations between divisor classes.
\[ \begin{pmatrix} 
2 & -1 & -1 \\
1 &  0 & -1 \\ 
2 & -2 & -1 
\end{pmatrix}\begin{pmatrix}
H \\ E \\ F
\end{pmatrix} \sim \begin{pmatrix}
H' \\ E' \\ F'
\end{pmatrix} \]
Moreover, $\varphi^{-1}$ is defined by the linear system $|2H'-C'-p'|$.

\subsection{The generic map of bidegree $(3,2)$} \label{sec!bidegree32}

Consider three pairwise skew lines $\ell_1,\ell_2,\ell_3\subset\PP^3$ and a fourth line $\ell_0$ which meets each of the first three. The generic map $\varphi \colon \PP^3\dashrightarrow \PP^3$ of bidegree $(3,2)$ is defined by the linear system $|3H-2\ell_0-\ell_1-\ell_2-\ell_3|$. Let $F'\subset\PP^3$ be the unique quadric surface containing all four lines $\ell_0,\ldots,\ell_3\subset F'$ and let $E'_i\subset \PP^3$ be the 
plane containing $\ell_0$ and $\ell_i$ for $i=1,2,3$. Then $\varphi$ can resolved by a diagram of the form
\[ \PP^3 \stackrel{\sigma}{\longleftarrow} Y \stackrel{\tau}{\longleftarrow} X \stackrel{\tau'}{\longrightarrow} Y' \stackrel{\sigma'}{\longrightarrow} \PP^3 \] 
where 
\begin{enumerate}
\item $\sigma$ is the blowup of $\ell_0\subset \PP^3$ with exceptional divisor $F_0\subset Y$, 
\item $\tau$ is the blowup of $\ell_1,\ell_2,\ell_3\subset Y$ with exceptional divisors $F_1,F_2,F_3\subset X$,
\item $\tau'$ contracts $E'_1,E'_2,E'_3\subset X$ onto points $p_1',p_2',p_3'\in Y'$,
\item $\sigma'$ contracts $F'\subset Y'$ onto a line $\ell'\subset \PP^3$.
\end{enumerate}
Let $\psi = \tau\circ\sigma$ and $\psi' = \tau'\circ\sigma'$. Then $-K_X \sim 4H-F_0-F_1-F_2-F_3 \sim 4H'-2E'_1-2E'_2-2E'_3-F'$ and we have the following relations between divisor classes.
\[ \begin{pmatrix} 
3 & -2 & -1 & -1 & -1 \\
1 & -1 & -1 &  0 &  0 \\ 
1 & -1 &  0 & -1 &  0 \\
1 & -1 &  0 &  0 & -1 \\ 
2 & -1 & -1 & -1 & -1 
\end{pmatrix}\begin{pmatrix}
H \\ F_0 \\ F_1 \\ F_2 \\ F_3
\end{pmatrix} \sim \begin{pmatrix}
H' \\ E'_1 \\ E'_2 \\ E'_3 \\ F'
\end{pmatrix} \qquad \begin{pmatrix} 
2 & -1 & -1 & -1 & -1 \\
1 & -1 & -1 & -1 &  0 \\ 
1 & -1 &  0 &  0 & -1 \\
1 &  0 & -1 &  0 & -1 \\ 
1 &  0 &  0 & -1 & -1 
\end{pmatrix}\begin{pmatrix}
H' \\ E'_1 \\ E'_2 \\ E'_3 \\ F'
\end{pmatrix} \sim \begin{pmatrix}
H \\ F_0 \\ F_1 \\ F_2 \\ F_3
\end{pmatrix} \]
Moreover, $\varphi^{-1}$ is defined by the linear system $|2H'-\ell'-p_1'-p_2'-p_3'|$ and is the generic map of bidegree $(2,3)$.

\subsection{Maps of bidegree $(3,3)$}

\subsubsection{The generic map of bidegree $(3,3)$} \label{sec!bidegree33}

The generic map of bidegree (3,3) is the classical \emph{cubo-cubic Cremona transformation} and has been studied by many authors, e.g\ \cite{katz}. Let $C\subset \PP^3$ be a smooth curve of degree 6 and genus 3 defined by the $3\times3$-minors of a $3\times 4$-matrix with linear entries. The trisecant lines to $C$ span a ruled surface $E'\subset \PP^3$ of degree 8 with multiplicity 3 along $C$. Then the birational map $\varphi\colon \PP^3\dashrightarrow \PP^3$ defined by the linear system $|3H-C|$ can be resolved by a symmetric diagram of the form
\[ \PP^3\stackrel{\psi}{\longleftarrow} X  \stackrel{\psi'}{\longrightarrow} \PP^3 \] 
where $\psi$ is the blowup of $C$ with exceptional divisor $E$ and $\psi'$ contracts $E'\subset X$ onto a curve $C'\subset \PP^3$ which is isomorphic to $C$. We have $-K_X \sim 4H-E\sim 4H'-E'$ the following relations between divisor classes.
\[ \begin{pmatrix} 
3 & -1 \\ 
8 & -3
\end{pmatrix}\begin{pmatrix}
H \\ E 
\end{pmatrix} \sim \begin{pmatrix}
H' \\ E'
\end{pmatrix} \]
Moreover, $\varphi^{-1}$ is defined by the linear system $|3H'-C'|$.

\subsubsection{A special map of bidegree $(3,3)$} \label{sec!sp-bidegree33}

We consider a degenerate case of the previous example in which $C=\Gamma\cup\ell_1\cup\ell_2\cup\ell_3$ is the union of a twisted cubic curve $\Gamma$ and three lines $\ell_1,\ell_2,\ell_3$ which are secant lines to $\Gamma$. This kind of cubo-cubic Cremona transformation was considered by Mella \cite[Proof of Proposition 2.2]{mella} and we briefly recall the description. 

Let $E',F'_1,F'_2,F'_3\subset \PP^3$ be the uniquely determined quadric surfaces such that $\ell_1,\ell_2,\ell_3\subset E'$ and $\Gamma,\ell_i,\ell_j\subset F'_k$ for $\{i,j,k\}=\{1,2,3\}$. The birational map $\varphi\colon \PP^3\dashrightarrow \PP^3$ defined by the linear system $|3H-\Gamma-\ell_1-\ell_2-\ell_3|$ can be resolved by a symmetric diagram of the form
\[ \PP^3 \stackrel{\sigma}{\longleftarrow} Y \stackrel{\tau}{\longleftarrow} X \stackrel{\tau'}{\longrightarrow} Y' \stackrel{\sigma'}{\longrightarrow} \PP^3 \] 
where 
\begin{enumerate}
\item $\sigma$ is the blowup of $\Gamma\subset\PP^3$ with exceptional divisor $E\subset Y$, 
\item $\tau$ is the blowup of $\ell_1,\ell_2,\ell_3\subset Y$ with exceptional divisors $F_1,F_2,F_3\subset X$,
\item $\tau'$ contracts $F'_1,F'_2,F'_3\subset X$ onto disjoint lines $\ell_1',\ell_2',\ell_3'\subset Y'$,
\item $\sigma'$ contracts $E'\subset Y'$ onto a twisted cubic curve $\Gamma'\subset \PP^3$.
\end{enumerate}
Let $\psi = \tau\circ\sigma$ and $\psi' = \tau'\circ\sigma'$. We have $-K_X \sim 4H-E-F_1-F_2-F_3 \sim 4H'-E'-F_1'-F_2'-F_3'$ and the following relations between divisor classes.
\[ \begin{pmatrix} 
3 & -1 & -1 & -1 & -1 \\
2 &  0 & -1 & -1 & -1 \\ 
2 & -1 &  0 & -1 & -1 \\
2 & -1 & -1 &  0 & -1 \\ 
2 & -1 & -1 & -1 &  0 
\end{pmatrix}\begin{pmatrix}
H \\ E \\ F_1 \\ F_2 \\ F_3
\end{pmatrix} \sim \begin{pmatrix}
H' \\ E' \\ F'_1 \\ F'_2 \\ F'_3
\end{pmatrix} \]
Moreover, $\varphi^{-1}$ is defined by the linear system $|3H'-\Gamma'-\ell_1'-\ell_2'-\ell_3'|$.

\section{Connecting the eleven families} \label{sec!proof}

We now constructing the ten types of volume preserving map (i)-(x), appearing in Figure~\ref{fig!flowchart}, that connect our eleven families.


\subsection{The map (x) between (A.1) and (C.4)}
\label{sec!triple}

Although it appears last in Figure~\ref{fig!flowchart}, we begin by explaining map (x) since it is by far the easiest case to deal with. Suppose that $\Delta$ has an $\widetilde E_6$ singularity, or in other words a triple point. Thus we may write $\Delta = \VV(a_3t + b_4)$ for some polynomials $a_3,b_4\in\CC[x,y,z]$. We let $\Delta'=\VV(a_3t)$ and consider the birational map
\[ \varphi\colon(\PP^3,\Delta) \dashrightarrow (\PP^3,\Delta'), \qquad \varphi(t,x,y,z) = \left(t+a_3^{-1}b_4,x,y,z\right). \]
which is volume preserving. To see this we can restrict to the affine patch of $\PP^3$ where $z=1$ (on both sides of $\varphi$) and then we compute that
\[ \varphi^*(\omega_{\Delta'}) = \varphi^*\left(\frac{dt\wedge dx\wedge dy}{a_3t}\right) = \frac{d(t+a_3^{-1}b_4)\wedge dx\wedge dy}{a_3(t+a_3^{-1}b_4)} = \frac{dt\wedge dx\wedge dy}{a_3t+b_4} = \omega_{\Delta}. \]
Lastly, note that $\Delta'=\VV(t) +\VV(a_3)$ is the union of a plane and the cone over a plane cubic curve, and thus $(\PP^3,\Delta')$ is a member of family (C.3).


\subsection{The maps (i) and (ii) between (A.2), (A.3) and (C.1)}

Next we produce the maps (i) and (ii) which connect the families (A.2), (A.3) and (C.1). To this end, it is convenient to introduce two new deformation families of log Calabi--Yau pairs:
\begin{description}
\item[(D.1)] $\left(\PP(1,1,2,3), \, \Delta\right)$ where $\Delta$ is the union of a plane and a $\dP_1$,
\item[(D.2)] $\left(\PP(1,1,1,2), \, \Delta\right)$ where $\Delta$ is the union of a plane and a $\dP_2$.
\end{description}

\begin{prop}
There exist volume preserving maps $\varphi_1,\ldots,\varphi_4$ linking these five families, as in the following diagram. Thus map (i) in Figure~\ref{fig!flowchart} is given by $\varphi_2^{-1}\circ\varphi_3\circ \varphi_1$ and map (ii) is given by $\varphi_4\circ\varphi_2$.
\begin{center}\begin{tikzpicture}[scale=1.2]
   \node (a) at (0,0) {(A.3)};
   \node (b) at (0,1.5) {(D.1)};
   \node (c) at (2,0) {(A.2)};
   \node (d) at (2,1.5) {(D.2)};
   \node (e) at (4,1.5) {(C.1)};
   \draw[->] (a) to node[left]  {$\varphi_1$} (b);
   \draw[->] (1.8,1.2) to node[left]  {$\varphi_2^{-1}$} (1.8,0.3);
   \draw[->] (2.2,0.3) to node[right] {$\varphi_2$} (2.2,1.2);
   \draw[->] (b) to node[above] {$\varphi_3$} (d);
   \draw[->] (d) to node[above] {$\varphi_4$} (e);
\end{tikzpicture}\end{center}

\end{prop}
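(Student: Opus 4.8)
The plan is to realise each $\varphi_i$ as a volume preserving \emph{two-ray link} between weighted projective spaces: perform a weighted blow-up of a log canonical centre, run the resulting relative MMP (a flip or flop, followed by a divisorial contraction), identify the far end with the asserted ambient space, and verify crepancy by a discrepancy computation together with the observation that the image of the boundary lies in the appropriate anticanonical system. Since a composition of volume preserving maps is volume preserving, this suffices.

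I would first construct $\varphi_1$ and $\varphi_2$, which are entirely analogous. For $\varphi_1$, take a pair in (A.3), so $\Delta=\VV(F)$ with $F$ of weighted degree $6$ in the weights $(1,2,3)$ on the local coordinates $(x,y,z)$ at the $\widetilde{E}_8$ point $p$. Let $\pi\colon W\to\PP^3$ be the weighted blow-up of $p$ with these weights, with exceptional divisor $E\cong\PP(1,2,3)$. The key local computation is that the discrepancy of $E$ is $(1+2+3)-1=5$, whereas $\Delta$ has weighted multiplicity $6$ at $p$ (its weighted tangent cone cuts out the ordinary curve $\Gamma\subset\PP(1,2,3)$, by the log canonical hypothesis of Proposition~\ref{prop!type-A-quartics}); hence $K_W+\widetilde\Delta+E=\pi^*(K_{\PP^3}+\Delta)$, so $\pi$ is crepant for the boundary $\Delta_W:=\widetilde\Delta+E$. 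As $\rho(W)=2$, the variety $W$ has exactly one other extremal contraction, and a toric fan computation identifies it, after the intervening flip, as a divisorial contraction $\pi'\colon W\dashrightarrow\PP(1,1,2,3)$ that contracts the strict transform of the plane $\VV(z)$ containing the distinguished line $\VV(y,z)\subset\Delta$. Under $\pi'$ the exceptional divisor $E$ maps to a coordinate hyperplane $\cong\PP(1,2,3)$ and $\widetilde\Delta$ maps to a hypersurface $\VV(g_6)$; verifying $\Delta'=\PP(1,2,3)+\VV(g_6)\in|{-K_{\PP(1,1,2,3)}}|$ (degree $1+6=7$) shows $(\PP(1,1,2,3),\Delta')$ lies in (D.1) with $\widetilde\Delta$ a $\dP_1$, and that $\varphi_1=\pi'\circ\pi^{-1}$ is volume preserving. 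The map $\varphi_2$ is identical with weights $(1,1,2)$: here $E\cong\PP(1,1,2)$ has discrepancy $3$, $\Delta$ has weighted multiplicity $4$, the link ends on $\PP(1,1,1,2)$, and $\widetilde\Delta$ becomes a degree-$4$ hypersurface, i.e.\ a $\dP_2$.

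The maps $\varphi_3$ and $\varphi_4$ realise ambiently the tower of del Pezzo surfaces $\dP_1=\Bl_1\dP_2$ and $\dP_2=\Bl_1\dP_3$, where $\dP_3$ is the cubic surface. I would build each as a weighted two-ray link between the relevant pair of ambient spaces, namely $\PP(1,1,2,3)\dashrightarrow\PP(1,1,1,2)$ and $\PP(1,1,1,2)\dashrightarrow\PP^3$, now centred at a log canonical centre of the boundary lying on the del Pezzo component (a point of its anticanonical base locus). The restriction of $\varphi_3$ (resp.\ $\varphi_4$) to the del Pezzo component is precisely the extraction (resp.\ contraction) of the relevant $(-1)$-curve in the tower, so by the adjunction properties of log canonical centres recalled in Section~2 these restrictions are automatically volume preserving; one then checks, exactly as above, that the ambient link is crepant and lands on the correct weighted space and boundary. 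Finally, maps (i) and (ii) of Figure~\ref{fig!flowchart} are obtained as the stated compositions $\varphi_2^{-1}\circ\varphi_3\circ\varphi_1$ and $\varphi_4\circ\varphi_2$.

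The main obstacle is the two-ray game analysis underlying each link: one must verify, from the fan combinatorics of the weighted blow-up, that the far end of the relative MMP is genuinely the asserted weighted projective space (correctly accounting for the intermediate flip), that $\widetilde\Delta$ is contracted onto a del Pezzo surface of exactly the predicted degree, and that no new discrepancies appear, so that the pair remains log Calabi--Yau throughout. A subsidiary difficulty, needed for $\varphi_3$ and $\varphi_4$, is to pin down the precise log canonical centre to blow up so that the induced map on del Pezzo components is the single blow-down of the tower while the ambient contraction stays crepant.
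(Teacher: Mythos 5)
Your treatment of $\varphi_1$ and $\varphi_2$ matches the paper's: the paper constructs exactly these Sarkisov links, writing them explicitly as $\varphi_1(t,x,y,z)=(y,z,xz,tz^2)$ and $\varphi_2(t,x,y,z)=(x,y,z,tz)$, and verifies crepancy by pulling back the volume form rather than by your (equally valid) discrepancy bookkeeping; your identification of the contracted plane $\VV(z)$, the intervening flip, and the anticanonical degree count $1+6=7$ all check out against the paper's formulas.

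The genuine gap is in $\varphi_3$ and $\varphi_4$. You centre these links at ``a log canonical centre of the boundary lying on the del Pezzo component (a point of its anticanonical base locus)'', and this fails on several counts. First, a generic member of (D.1) or (D.2) has coregularity one (the curve $D_1\cap D_2$ is smooth elliptic), so the pair has \emph{no} zero-dimensional log canonical centres at all: the point you propose to blow up need not exist, and the anticanonical base point of the $\dP_1$ is in any case unrelated to the construction (for the $\dP_2$ the anticanonical system is even base point free). Second, and more fundamentally, the direction is wrong: blowing up a point of the del Pezzo component restricts on that component to a blow-up of the surface, moving you \emph{down} the tower ($\dP_1$ becomes a degree-$0$ surface), whereas $\varphi_3$ must restrict to the contraction $\dP_1\dashrightarrow\dP_2$; your phrase ``extraction (resp.\ contraction)'' already signals this confusion, since both restrictions must be contractions of a $(-1)$-curve. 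What the paper does instead is to show that the del Pezzo component $D_2$ always contains a \emph{line} $\ell$ (because $D_2$ is a degeneration of a $\dP_1$, which in general contains $240$ lines, and a limit of lines is a line), normalise $\ell=\VV(y,z)$, and use the monomial map $\varphi_3(t,x,y,z)=(t,x,y^{-1}z,y)$, which blows up the curve $\ell$ and contracts the plane $\VV(y)$; similarly for $\varphi_4$. Note that $\ell$ is \emph{not} a log canonical centre of the pair: it is merely contained in the divisorial centre $D_2$, and that containment (a $k$-dimensional baselocus component inside a $(k+1)$-dimensional log canonical centre) is what volume preservation requires, so your implicit demand that the centre of the link be an lc centre is both too strong and of the wrong dimension. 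To repair your argument, replace the point-centred link by a link centred on a line of the del Pezzo component, and supply the existence argument for such a line in the possibly degenerate cases.
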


\begin{proof}
We construct each of the maps $\varphi_1,\ldots,\varphi_4$ in turn.

\paragraph{The map $\varphi_1$.}
We start with a quartic $\Delta\subset\PP^3$ in the family (A.3). By Proposition~\ref{prop!type-A-quartics} it is defined by an equation $F_4(t,x,y,z)$ of the form
\[ F_4(t,x,y,z) = a_0t^2z^2 + b_0ty^3 + c_1txz + d_2tz + e_0x^3z + f_2x^2 + g_3x + h_4 \]
for some homogeneous polynomials $a_0,\ldots,h_4\in\CC[y,z]$. 

The two-ray game that begins with the $(1,2,3)$-weighted blowup at the singularity $p\in \PP^3$ initiates a Sarkisov link to $\PP(1,1,2,3)$, resulting in the birational map
\[ \varphi_1\colon \PP^3 \dashrightarrow \PP(1,1,2,3)
\qquad \varphi_1(t,x,y,z) = (y,z,xz,tz^2). \]
Now let $u=xz$ and $v=tz^2$ be coordinates on $\PP(1,1,2,3)$ and consider the sextic equation 
\[ G_6(y,z,u,v) = a_0v^2 + b_0vy^3 + c_1uv + d_2vz + e_0u^3 + f_2u^2 + g_3uz + h_4z^2. \]
We see that $\varphi_1$ maps $\Delta$ birationally onto $\VV(G_6)$. Moreover, in the affine patches of $\PP^3$ and $\PP(1,1,2,3)$ where $y=1$ we compute that
\[ \varphi_1^*\left(\frac{dz\wedge du\wedge dv}{zG_6(1,z,u,v)}\right) = \frac{dz\wedge d(xz)\wedge d(tz^2)}{zG_6(1,z,xz,tz^2)} = \frac{dz\wedge dx\wedge dt}{F_4(t,x,1,z)} \]
and thus it follows that $\varphi_1$ produces a volume preserving map of pairs if we consider the boundary divisor $\Delta' = \VV(zG_6)\subset \PP(1,1,2,3)$. In particular, $\Delta'$ is the union of a plane $D_1=\VV(z)$ and a sextic $D_2=\VV(G_6)$, which is a (possibly degenerate) $\dP_1$.

\paragraph{The map $\varphi_2$.}
The construction of $\varphi_2$ is very similar to that of $\varphi_1$. By Proposition~\ref{prop!type-A-quartics}, we are considering a quartic $\Delta$ defined by an equation of the form $F_4(t,x,y,z) = a_0t^2z^2 + b_2tz + c_4$ for some homogeneous polynomials $a_0,b_2,c_4\in\CC[x,y,z]$. 

This time the $(1,1,2)$-weighted blowup at the singularity $p\in \PP^3$ initiates the map 
\[ \varphi_2\colon \PP^3\dashrightarrow \PP(1,1,1,2), \qquad \varphi_2(t,x,y,z)=(x,y,z,tz). \] 
Letting $u=tz$ be the coordinate on $\PP(1,1,1,2)$ then, by a similar calculation to the one above, we see that $\varphi_2$ produces a volume preserving map, where the boundary divisor $\Delta' = \VV(zG_4)\subset \PP(1,1,1,2)$ is the union of a plane and a (possibly degenerate) $\dP_2$ defined by the quartic equation $G_4 = a_0u^2 + b_2u + c_4$.

\paragraph{The map $\varphi_3$.}
Suppose that $(\PP(1,1,2,3), \, \Delta)$ is a log Calabi--Yau pair with coordinates $t,x,y,z$ and boundary divisor $\Delta=\VV(tF_6)$ consisting of a plane $D_1=\VV(t)$ and $D_2=\VV(F_6)$, a $\dP_1$. The component $D_2$ must contain a line $\ell\subset D_2$, since it arises as a degeneration of a $\dP_1$ which in general contains 240 lines. Without loss of generality we can take $\ell = \VV(y,z)$, and thus we may write $F_6 = a_4y + b_3z + c_0z^2$ for some homogeneous polynomials $a_4,b_3,c_0\in\CC[t,x,y]$. We can contract $\ell$ by applying the birational map
\[ \varphi_3\colon \PP(1,1,2,3) \dashrightarrow \PP(1,1,1,2) \qquad \varphi(t,x,y,z) = (t,x,y^{-1}z,y) \]
and, if we let $u=y^{-1}z$, then it is easy to check that this produces a volume preserving map of pairs, where the boundary divisor $\Delta'=\VV(tG_4)\subset\PP(1,1,1,2)$ is given by the union of a plane and a (possibly degenerate) $\dP_2$ defined by the equation $G_4 = a_4 + b_3u + c_0u^2y$.

\paragraph{The map $\varphi_4$.}
The construction of $\varphi_4$ is very similar to that of $\varphi_3$. Indeed if $t,x,y,z$ are coordinates on $\PP(1,1,1,2)$ and $\Delta=\VV(tF_4)$, then, arguing as before, we may assume that the component $D_2=\VV(F_4)$ contains a line $\ell=\VV(y,z)$. Then the map
\[ \varphi_4 \colon \PP(1,1,1,2) \dashrightarrow \PP^3 \qquad \varphi(t,x,y,z) = (t,x,y,y^{-1}z)  \]
is easily seen to produce a volume preserving map, for a boundary divisor $\Delta'\subset \PP^3$ given by the union of the plane $\VV(t)$ and the cubic surface obtained by contracting $\ell\subset D_2$.
\end{proof}


\subsection{The map (iii) between (A.4) and (B.1)}
\label{sec!stE8}

It follows from the description given in Proposition~\ref{prop!type-A-quartics}, that in this case the equation defining $\Delta\subset \PP^3$ can be written in the form
\[ \Delta = \VV\big(\mu_1 (tz+x^2)^2 + \mu_2 ty^3 + (2a_1x + b_2)(tz+x^2) + c_2x^2 + d_3x + e_4\big) \]
for some $\mu_1,\mu_2\in\CC$ and homogeneous polynomials $a_1,b_2,c_2,d_3,e_4\in\CC[y,z]$. Moreover we may assume that both $\mu_1,\mu_2\neq0$, or else $\Delta$ has either a triple point (A.1) or a degenerate $\widetilde E_7$ singularity (A.2). Thus by a change of coordinates both $\mu_1$ and $\mu_2$ can be rescaled to 1.

\paragraph{Finding a conic in $\Delta$.}
By introducing the coordinates $u/w = (tz+x^2)/z^2$ and $v/w = x/z$, we see that $\Delta$ is birational to a surface $W\subset \PP^2_{u,v,w}\times \PP^1_{y,z}$  defined by a bihomogeneous equation of degree $(2,4)$
\[ W = \VV\left( z^4u^2 + 2a_1z^3uv + (y^3 + b_2z)zuw + (c_2z - y^3)zv^2 + d_3zvw + e_4w^2 \right). \]
The fibres of the map $\xi\colon W\to \PP^1_{y,z}$ are conics, which are reducible over the discriminant locus
\[ \Xi =\VV\left(
z^3\left((y^3+b_2z)^2-4e_4z^2\right)\left(y^3+a_1^2z-c_2z\right) - z^4\left(a_1(y^3+b_2z)-d_3z\right)^2\right), \] 
and, since the top term of this polynomial in powers of $y$ is given by $y^9z^3 + \cdots$, there must be a root of the form $y=\lambda z$. This implies that the equation defining $W$ factorises as
\[ z^4(u + \alpha v + \beta w)(u + \gamma v + \delta w) \mod y  \]
for $\alpha,\beta,\gamma,\delta\in\CC$ where $\alpha + \gamma = 2a_1(\lambda,1)$, $\beta  + \delta = b_2(\lambda,1)+\lambda^3$, $\alpha\gamma = c_2(\lambda,1)-\lambda^3$, $\alpha\delta + \beta\gamma = d_3(\lambda,1)$ and $\beta\delta = e_4(\lambda,1)$.

In terms of the quartic equation defining $\Delta$, we find that it can be rewritten
\[ (tz + x^2 + \alpha xz + \beta z^2)(tz + x^2 + \gamma xz + \delta z^2) + \cdots \] 
\[\cdots + (y-\lambda z)\left( t(y^2+\lambda yz + \lambda^2z^2) + (2a_0x + b_1)(tz+x^2) + c_1x^2 + d_2x + e_3\right)  \]
where $a_0,b_2,c_1,d_2,e_3$ are defined by setting $a_1 = a_0(y-\lambda z) + a_1(\lambda,1)z$, $b_3 = b_2(y-\lambda z) + b_3(\lambda,1)z^3$, etc. In particular we see that $\Delta$ contains a conic curve $C= \VV(y-\lambda z,\, tz + x^2 + \alpha xz + \beta z^2)$.

\paragraph{Reduction to (B.1).}
To reduce to a simpler case, we apply the quadratic map 
\[ \varphi\colon \PP^3\dashrightarrow \PP^3, \qquad \varphi(t,x,y,z) = \left((y-\lambda z)^{-1}(tz+x^2+\alpha xz+\beta z^2),x,y,z\right) \]
given by the linear system of all quadrics passing through $C$ and the infinitely near point $p\in C$. This induces a volume preserving map $\varphi\colon (\PP^3,\Delta)\dashrightarrow (\PP^3,\Delta')$ where $\Delta'$ is the quartic defined by the equation
\[ \left(tz+y^2+\lambda yz+\lambda^2z^2+2a_0xz+b_1z\right)\left(ty-\lambda tz-\alpha xz-\beta z^2\right) + \cdots \] 
\[\cdots + (\gamma x + \delta z)tz^2 + c_1x^2z + d_2xz + e_3z - (y^2+\lambda yz + \lambda^2z^2)x^2  \]
which is singular along the line $\ell=\VV(y,z)$.


\subsection{The map (iv) between (B.1) and (C.1)}
\label{sec!line}

Throughout this subsection we assume that $\Delta\subset\PP^3$ has a line of double points along $\ell_0=\VV(y,z)$, so that the equation of $\Delta$ can be written in the form
\begin{equation}
a_2t^2 + b_2tx + c_2x^2 + d_3t + e_3x + f_4  \label{eq!double-line} 
\end{equation} 
for homogeneous polynomials $a_2,\ldots,f_4\in\CC[y,z]$. We deal with these surfaces in three steps. 

\paragraph{Singularities away from $\ell_0$.}
First we consider the cases in which $\Delta$ is singular at a point outside of the line $\ell_0$.

\begin{lem}
If $\Delta$ is singular at some point $p\in\Delta\setminus \ell_0$ then $(\PP^3,\Delta)$ is volume preserving equivalent to a pair from case (A.2).
\end{lem}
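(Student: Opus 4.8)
The statement asserts that a quartic $\Delta$ with a double line $\ell_0 = \VV(y,z)$, which additionally has a singular point $p \in \Delta \setminus \ell_0$, is volume preserving equivalent to a pair from family (A.2) — i.e.\ to an irreducible quartic with an $\widetilde E_7$ singularity. Let me think about what kind of singularity $p$ can be and how to exploit it.

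---

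The plan is to use the extra singular point $p$ as the centre of a volume preserving map that "upgrades" the double line into an isolated simple elliptic singularity.

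First I would analyse what the singularity at $p \in \Delta \setminus \ell_0$ can be. Since $(\PP^3, \Delta)$ is log canonical (being a log Calabi--Yau pair of coregularity $\leq 1$) and $\Delta$ already carries a double curve along $\ell_0$, the point $p$ away from $\ell_0$ must be a strictly log canonical or Du Val singularity of $\Delta$; by the coregularity hypothesis together with the classification in Table~\ref{table!log-canonical} and Proposition~\ref{prop!type-A-quartics}, the most interesting case is that $p$ is a simple elliptic point. I would next normalise coordinates so that $p = (1:0:0:0)$, which forces the equation~\eqref{eq!double-line} to have no pure $t^4, t^3, t^2$ terms in an appropriate sense — concretely, the conditions $a_2, d_3$ (and the relevant lower-weight pieces) vanishing to the right order at $p$. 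The key geometric observation I would try to establish is that the line $L = \overline{p\, q}$ joining $p$ to a point $q$ of $\ell_0$, or the plane/pencil structure through $p$, organises the baselocus of the reducing map.

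The main step is then to construct the volume preserving map. I expect it to be a map based at $p$ together with the double line $\ell_0$, of one of the low-bidegree types recorded in Section~3 (most plausibly the generic $(2,2)$ map of \S\ref{sec!bidegree22}, whose baselocus is a plane conic plus a point, or a cubic map). For this to be volume preserving I must ensure that the $0$-dimensional part of the baselocus ($p$) lies in a log canonical centre of dimension $\geq 1$ and the $1$-dimensional part (a conic through $p$, or the line $\ell_0$ itself) is a log canonical centre of dimension $\geq 2$, exactly as the general principle in \S\ref{sec!strategy} demands: the $k$-dimensional components of $\Bs(\varphi)$ must sit inside $(k+1)$-dimensional log canonical centres of $(\PP^3, \Delta)$. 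Here $\ell_0$ is contained in the double locus (a divisorial log canonical centre of the dlt modification), and $p$ is contained in $\Delta$, so the incidence conditions are plausible. I would then compute the image $\Delta'$ of $\Delta$ under $\varphi$ using the divisor-class relations from the relevant subsection of Section~3, and verify that $\Delta'$ is an irreducible quartic whose double line has been resolved into an isolated $\widetilde E_7$ singularity, placing it in family (A.2).

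The hard part will be two-fold. First, verifying irreducibility and the precise singularity type of $\Delta'$: I must check that the map collapses the double line $\ell_0$ to a smooth point (or resolves it) while converting the singularity at $p$ into, or preserving it as, an $\widetilde E_7$ point, and that no new bad singularities or extra components appear — this requires a careful bookkeeping of multiplicities along the baselocus and of the pullback of $\Delta$. Second, and most delicate, is confirming the volume preserving condition: I would compute $\varphi^*(\omega_{\Delta'})$ explicitly in an affine chart, exactly as in \S\ref{sec!triple} and the construction of $\varphi_1$, and check it equals $\omega_\Delta$ up to scalar, which amounts to matching the $-K$ classes $-K_X \sim 4H - (\text{exceptionals}) \sim 4H' - (\text{exceptionals}')$ on the common resolution $X$. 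The genuine obstacle is ensuring the baselocus of the chosen Cremona map actually lies in $\Delta$ with the correct tangency/multiplicity so that the incidence-with-log-canonical-centres condition holds; I anticipate needing to position the base conic to pass through $p$ and be tangent to $\Delta$ along suitable directions, which pins down the coordinate choices forced by the earlier normalisation.
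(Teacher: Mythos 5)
Your proposal guesses the right family of maps---a bidegree $(2,2)$ quadratic transformation whose baselocus is the singular point $p$ together with a conic contained in $\Delta$---and this is indeed what the paper does. But as written it is a strategy outline rather than a proof (every step is ``I expect'', ``I would'', ``I anticipate''), and the outline contains concrete errors that would derail its execution. First, your normalisation $p=(1{:}0{:}0{:}0)$ is impossible: that point lies on $\ell_0=\VV(y,z)$, contradicting the hypothesis $p\in\Delta\setminus\ell_0$; the paper takes $p=(0,0,0,1)$. Second, your guess that the base conic should ``pass through $p$ and be tangent to $\Delta$ along suitable directions'' is wrong: in the correct construction the conic lies in a plane not containing $p$ and is simply contained in $\Delta$; it is disjoint from $p$.

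The key idea you are missing is a purely algebraic normal form. Singularity along $\ell_0$ means the equation \eqref{eq!double-line} lies in the ideal $(y,z)^2$, and singularity at $p=(0,0,0,1)$ (any point of multiplicity $\geq 2$ suffices---no case analysis into simple elliptic versus Du Val is needed) then lets one rewrite it as
\[
g_2(t,x,y)\,y^2 + h_2(t,x,y)\,yz + i_2(t,x,y)\,z^2
\]
with $g_2,h_2,i_2$ homogeneous quadrics \emph{independent of $z$}. This exhibits the conic $C=\VV(g_2,z)\subset\Delta$ (the residual curve of the plane section $z=0$, after removing the double line), and the linear system of quadrics through $p$ and $C$ gives the explicit map $\varphi(t,x,y,z)=(tz,xz,yz,g_2)$. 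One then computes directly that $\varphi$ maps $\Delta$ to $\Delta'=\VV(y^2z^2+h_2yz+g_2i_2)$ (indeed the pullback of this equation is $z^2g_2$ times the equation of $\Delta$, accounting exactly for the two contracted divisors $\VV(z)$ and $\VV(g_2)$), that the map is volume preserving, and that $\Delta'$ has an $\widetilde E_7$ singularity at $(0,0,0,1)$, i.e.\ belongs to family (A.2). Everything you defer as ``the hard part''---irreducibility, the singularity type of the image, the volume form check---is precisely what this explicit normal form makes immediate, and your proposal gives no mechanism for carrying any of it out.
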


\begin{proof}
 Without loss of generality we may assume that $p=(0,0,0,1)$ and the equation \eqref{eq!double-line} of $\Delta$ can be rewritten in the form
\[ g_2(t,x,y)y^2 + h_2(t,x,y)yz + i_2(t,x,y)z^2  \]
for some homogeneous polynomials $g_2,h_2,i_2\in\CC[t,x,y]$ which don't depend on $z$. By considering the linear system of quadrics passing through the point $p$ and the conic $\VV(g_2,z)\subset\Delta$ we get a volume preserving map of bidegree (2,2)
\[ \varphi\colon (\PP^3,\Delta)\dashrightarrow (\PP^3,\Delta'), \qquad \varphi(t,x,y,z) = (tz,xz,yz,g_2) \]
where $\Delta' = \VV(y^2z^2 + h_2yz + g_2i_2)$ is a quartic with an $\widetilde E_7$ singularity at $(0,0,0,1)$.
\end{proof}

Thus we may assume that $\Delta\setminus \ell_0$ is smooth, or else proceed via the (A.2) case.

\paragraph{Degenerate cusp singularities along $\ell_0$.}
Now consider the normalisation $\nu\colon \overline \Delta\to \Delta$ with ramification curve $\overline D= \nu^{-1}(\ell_0)$. Since $(\overline \Delta,\overline D)$ is a log Calabi--Yau pair we either have that 
\begin{enumerate}
\item $\overline D\subset \overline\Delta$ is a smooth elliptic curve and $\Delta$ has four pinch points along $\ell_0$ corresponding to the four ramification points of the double cover $\nu|_{\overline D}\colon \overline D\to \ell_0$, or
\item $\overline D\subset \overline\Delta$ is a nodal curve of arithmetic genus 1 and $\Delta$ has a degenerate cusp singularity at the point $\nu(p)\in \ell_0$ corresponding to the image of a node of $p\in \overline D$.
\end{enumerate}
We now dispense with the cases in which $\Delta$ has a degenerate cusp by showing that $(\PP^3,\Delta)$ can be treated as a special case of a previously considered family.

\begin{lem}
If $\Delta$ has a degenerate cusp singularity  along $\ell_0$ then the pair $(\PP^3,\Delta)$ is a special case of either (A.1) or (A.3).
\end{lem}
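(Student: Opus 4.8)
We are in the situation where $\Delta \subset \PP^3$ is a non-normal quartic with a double line along $\ell_0 = \VV(y,z)$, with equation of the form \eqref{eq!double-line}, and we have reduced to the case where $\Delta$ has a \emph{degenerate cusp} along $\ell_0$. This means the normalised boundary $\overline D \subset \overline\Delta$ is a nodal (not smooth elliptic) curve of arithmetic genus $1$, and $\Delta$ acquires a worse singularity at the image point $q = \nu(p)$ of a node $p \in \overline D$. The goal is to show $(\PP^3,\Delta)$ is really a special case of family (A.1) (a triple point) or (A.3) (a weighted-homogeneous $\widetilde E_8$).

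**Plan of the proof.** The strategy is to analyse the local structure of $\Delta$ at the distinguished point $q \in \ell_0$ where the degenerate cusp occurs, and to show that this forces $\Delta$ to have either a triple point there (putting us in (A.1)) or to be sufficiently degenerate along $\ell_0$ that a coordinate change exhibits the $\widetilde E_8$ normal form of (A.3). First I would normalise coordinates so that the degenerate cusp sits at $q = (1,0,0,0)$, i.e.\ in the chart $t=1$. Since $q$ lies on $\ell_0$, the point is analytically of type $T_{\infty\infty\infty}$, $T_{p\infty\infty}$, or $T_{pq\infty}$ from Table~\ref{table!log-canonical} --- the degenerate cusp cases. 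The key observation is that a degenerate cusp is a non-isolated singularity whose local equation, after the double-line assumption, carries the extra vanishing needed to produce a triple point of $\Delta$ at $q$. I would expand the dehomogenised equation at $q$ and examine the multiplicity: if $\mult_q \Delta = 3$, then $q$ is a triple point and we are immediately in (A.1) by Proposition~\ref{prop!type-A-quartics}.

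**The remaining case.** If instead $\mult_q\Delta = 2$, then $q$ is a double point of $\Delta$ lying on the double line $\ell_0$, and the degenerate cusp condition constrains the quadratic and higher tangent cones. Here the plan is to show that the presence of a degenerate cusp (rather than a pinch point or transverse self-intersection) means the tangent cone degenerates so that, after a weighted-homogeneous coordinate change adapted to weights $(0,1,2,3)$ on $(t,x,y,z)$, the equation takes the $\widetilde E_8$ form of case (A.3). Concretely, I would show that the coefficients $a_2,\ldots,f_4$ in \eqref{eq!double-line} must specialise so that the lowest-weight part of $F$ vanishes to the required order, forcing $\deg F \geq 6$ in the $(0,1,2,3)$-weighting. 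The fact that family (A.3) --- and not the stable family (A.4) --- is the relevant one should follow because $\Delta$ contains the line $\ell_0$, matching the Remark distinguishing (A.3) from (A.4) (quartics in (A.3) contain the line $\VV(y,z)$).

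**Main obstacle.** The hardest part will be the bookkeeping in the $\mult_q = 2$ case: translating the abstract statement ``$\overline D$ has a node mapping to $q$'' into an explicit condition on the polynomials $a_2,\dots,f_4$, and then verifying that this condition is exactly what is needed to match the weighted-homogeneous $\widetilde E_8$ normal form. One must be careful that the local degenerate-cusp type ($T_{\infty\infty\infty}$ versus $T_{p\infty\infty}$ versus $T_{pq\infty}$) does not produce a genuinely new global quartic outside families (A.1) and (A.3); I would handle the subtypes uniformly by appealing to the fact (stated before Proposition~\ref{prop!type-A-quartics}) that all these degenerate cusps are degenerations of an $\widetilde E_8$ singularity, so that once the weighted multiplicity is pinned down the classification of Proposition~\ref{prop!type-A-quartics} applies directly. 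Since both target families are closed under degeneration, it suffices to exhibit the general member's normal form and invoke closure, rather than chasing every degenerate subcase.
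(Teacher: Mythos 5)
Your skeleton matches the paper's: place the degenerate cusp at $q=(1,0,0,0)$, split on $\mult_q\Delta$, send multiplicity $\geq 3$ to (A.1), and in the multiplicity-two case show that the equation \eqref{eq!double-line} has degree $\geq 6$ with respect to the weights $(0,1,2,3)$, hence lies in (A.3). But the step you defer as ``bookkeeping'' is the entire content of the lemma, and it is not heavy bookkeeping; it is two specific coefficient conditions extracted from Table~\ref{table!log-canonical}, which your proposal never identifies. Namely: (a) a degenerate cusp of multiplicity two is of type $T_{2q\infty}$, whose quadratic part is a rank-one quadratic form; since the quadratic part of \eqref{eq!double-line} at $q$ is exactly $a_2(y,z)$, this forces $a_2$ to be a nonzero square, without loss of generality $a_2=z^2$ (rank two would give a normal crossing point $A_\infty$, rank zero would give multiplicity three); and (b) excluding the pinch point $D_\infty$ (normal form $z^2+xy^2$ in these coordinates) forces $z\mid b_2$, i.e.\ $b_2=b_1z$. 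Granting (a) and (b), every monomial of \eqref{eq!double-line} visibly has $(0,1,2,3)$-weight at least $6$: $z^2t^2$ has weight $6$, $b_1ztx$ has weight $\geq 2+3+0+1=6$, $c_2x^2$ has weight $\geq 4+2$, $d_3t$ has weight $\geq 6$, and $e_3x$, $f_4$ have weight $\geq 7$, $\geq 8$. Without (a) and (b) the weight bound simply cannot be established, so as written your argument has a genuine gap at its crucial point.

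The fallback you propose to close that gap does not work, for two reasons. First, your claim that all these degenerate cusps are degenerations of an $\widetilde E_8$ singularity contradicts the paper's own conventions (stated after Table~\ref{table!log-canonical}): $T_{pq\infty}$ with $p,q\geq 3$ are $\widetilde E_6$ singularities (these are the triple-point cases), $T_{2q\infty}$ with $q\geq 4$ are $\widetilde E_7$ singularities, and only $T_{23\infty}$ is $\widetilde E_8$. Second, Proposition~\ref{prop!type-A-quartics} classifies quartics with \emph{isolated} simple elliptic or cusp singularities, so it cannot be ``applied directly'' to a quartic that is non-normal along $\ell_0$; membership in the closed family (A.3) must be verified through its defining weighted-degree condition, which is precisely the computation above. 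Note in particular that a quartic with a $T_{2q\infty}$ point, $q\geq 4$ --- locally an $\widetilde E_7$ degeneration --- still lands in family (A.3), not (A.2): the global family is not determined by the local $\widetilde E_k$ type, which is why the explicit argument via $a_2$ and $b_2$ is unavoidable.
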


\begin{proof}
Suppose that $\Delta$ has a degenerate cusp singularity of type $T_{pq\infty}$ at the point $(1,0,0,0)\in\Delta$. If both $p,q\geq3$ then $\mult_p\Delta\geq 3$, so $\Delta$ has a triple point and we can consider it as a degeneration of the case (A.1). Otherwise we have a $T_{2q\infty}$ singularity. Looking at the equation \eqref{eq!double-line} of $\Delta$ in a neighbourhood of this point, we see that $a_2\in\CC[y,z]$ is a nonzero square, so without loss of generality we can take $a_2=z^2$. Then we also have that $b_2=b_1z$ for some $b_1\in\CC[y,z]$. Thus the equation of $\Delta$ has degree $\geq6$ with respect to the weights $1,2,3$ for $x,y,z$ and we can treat it as a degeneration of case (A.3).
\end{proof}

\paragraph{The generic case.}
Thus we may reduce ourselves to considering the generic case in which $\Delta$ is smooth outside of $\ell_0$ and $\overline D\subset \overline \Delta$ is a smooth elliptic curve. In particular, the normalisation $\nu\colon \overline \Delta\to \Delta$ is equal to the minimal resolution $\overline\Delta = \widetilde \Delta$. Recall that we have the following construction of Urabe, described in \S\ref{sec!type-B-quartics}, which consists of volume preserving maps of log Calabi--Yau pairs 
\[ (\Delta,0)\stackrel{\mu}{\longleftarrow}(\widetilde \Delta, D)=: (\widetilde \Delta_0,D_0)\stackrel{f_1}{\longrightarrow}\cdots \stackrel{f_9}{\longrightarrow}(\widetilde\Delta_9,D_9) = (\PP^2,D_9) \] 
and the map $\widetilde \Delta\to\Delta$ is realised by the nef divisor class $A=\mu^*\sO_\Delta(1)=4h-2e_1-e_2-\cdots-e_9$. In particular, under our additional assumptions we must have that $D_9\subset\PP^2$ is a smooth cubic curve and, if $p_i\in\PP^2$ is the point that corresponds to the centre of the blowup $f_i$, then none of the $p_i$ can be infinitely near to another or else $\Delta$ would have a Du Val singularity away from $\ell_0$.

Since $D\sim 3h-e_1\cdots-e_9$ we have $A\sim D + (h - e_1)$ and $A\cdot(h-e_1)=2$, and thus the pencil of hyperplanes in $\PP^3$ that pass through $\ell_0=\mu(D)$ cuts out a pencil of residual conics $|h-e_1|$ in $\widetilde\Delta$. This pencil has eight reducible fibres, corresponding to eight decompositions of the form $(h-e_1) = (h-e_1-e_i) + (e_i)$ for $i=2,\ldots,9$. Pick three lines $\ell_1,\ell_2,\ell_3\subset \Delta$ from three of these reducible fibres, e.g.\ $\ell_i=\mu(e_{i+1})$ for $i=1,2,3$, which are pairwise skew and all of which intersect $\ell_0$ transversely. 

Now we consider the map $\varphi\colon \PP^3\dashrightarrow \PP^3$ of bidegree (3,2) which is given by the linear system $|{\sO_{\PP^3}(3)-2\ell_0-\ell_1-\ell_2-\ell_3}|$. Considering the resolution $\PP^3\stackrel{\psi}{\leftarrow} X \stackrel{\psi'}{\rightarrow} \PP^3$ of $\varphi$, as in the notation of \S\ref{sec!bidegree32}, we see that $K_X + \Delta_X \sim 0$ where $\Delta_X := \psi^{-1}_*(\Delta) + F_0$. Thus, setting $\Delta':=\psi'_*(\Delta_X)$, we obtain a volume preserving map of pairs $\varphi\colon(\PP^3,\Delta)\dashrightarrow(\PP^3,\Delta')$. Since we have
\[ \psi^{-1}_*(\Delta) \sim  3H'-E_1'-E_2'-E_3'-F' \quad \text{and} \quad F_0 \sim H' - E_1' - E_2' - E_3' \]
it follows that $\Delta'=D_1'+D_2'$ is the union of a cubic surface $D_1'=\psi'_*(\psi^{-1}_*(\Delta))$ and a plane $D_2'=\psi'_*(F_0)$, where $\ell'\subset D_1'$ and $p_1',p_2',p_3'\in D_1'\cap D_2'$.


\subsection{The map (v) between (B.2) and (C.1)}
\label{sec!conic}

We now consider the case of an irreducible quartic $\Delta\subset \PP^3$ with double points along a plane conic $C = \VV(t,q_2)$. In particular, $\Delta = \VV( q_2^2 + a_1tq_2 + b_2t^2 )$ for some $a_1,b_2\in \CC[x,y,z,t]$. Moreover, we may assume that $C$ is irreducible, or else $\Delta$ is singular along a line and we can treat $(\PP^3,\Delta)$ as a special case of family (B.1). We now construct a volume preserving map $\varphi\colon (\PP^3,\Delta)\dashrightarrow (\PP^3,\Delta')$ of bidegree (2,2) whose baselocus consists of $C$ and a well-chosen point infinitely near to $C$.

To do this, we let $X=\VV(tu-q_2)\subset \PP^4_{t,x,y,z,u}$ and consider a factorisation $\varphi = g\circ f^{-1}$
\[ (\PP^3,\Delta)\stackrel{f}{\dashleftarrow} (X,\Delta_X) \stackrel{g}{\dashrightarrow} (\PP^3,\Delta'), \]
where $f,g$ are volume preserving maps with the following properties.
\begin{enumerate}
\item The boundary divisor $\Delta_X=\VV(t(u^2+a_1u+b_2))\in |{-K_X}|$ consists of two irreducible components: a quadric cone $D_1 = \VV(t,q_2)$ which is singular at $p=(0,0,0,0,1)\in \PP^4$, and a $\dP_4$ given by $D_2=\VV(ut-q_2, u^2+a_1u+b_2)$. These two components meet in an ordinary curve $\Gamma=D_1\cap D_2$.
\item The map $f$ is the projection from $p\in X$, which contracts $D_1$ onto $C$ and maps $D_2$ birationally onto $\Delta$.
\item The map $g$ is the projection from a general point $q\in\Gamma$. Since $g$ maps $D_1$ birationally onto a plane and $D_2$ birationally onto a cubic surface, we set $\Delta' := g(\Delta_X)$. 
\end{enumerate}
Thus $(\PP^3,\Delta')$ is a pair in the family (C.1).


\subsection{The map (vi) between (C.1) and (C.2)}
We suppose that $\Delta=D_1+D_2$ is the union of a plane $D_1$ and a cubic surface $D_2$. We may assume that $D_2$ is smooth, or else we can proceed directly to case (C.3). 

Since $D_2$ is smooth, we can choose three skew lines $\ell_1,\ell_2,\ell_3$ amongst the 27 lines of $D_2$ and a fourth line $\ell_0$ which meets each of the first three. Now consider the map $\varphi\colon \PP^3\dashrightarrow \PP^3$ of bidegree (3,2) defined by the linear system $|{\sO_{\PP^3}(3)-2\ell_0-\ell_1-\ell_2-\ell_3}|$, as described in \S\ref{sec!bidegree32}. Setting $\Delta_X := \psi^{-1}_*(\Delta)$ and $\Delta':=\psi'_*(\Delta_X)$, we get a resolution as in \eqref{eq!vp} giving a volume preserving map $\varphi\colon (\PP^3,\Delta)\dashrightarrow(\PP^3,\Delta')$. Since
\[ \psi^{-1}_*(D_1)= 2H'-E_1'-E_2'-E_3'-F' \quad \text{and} \quad \psi^{-1}_*(D_2) = 2H'-E'_1-E'_2-E'_3.  \]
it follows that $\Delta' = D_1'+D_2'$ is given by the union of two quadric surfaces, where $D_i'=\psi'_*(\psi^{-1}_*(D_i))$ for $i=1,2$, such that $\ell'\subset D_1'$ and $p_1',p_2',p_3'\in D_1'\cap D_2'$.


\subsection{The map (vii) between (B.3) and (C.2)}
\label{sec!twisted-cubic}

We suppose that $\Delta\subset \PP^3$ has double points along a twisted cubic curve $\Sigma$. If $\Sigma$ is a degenerate twisted cubic curve then it splits into the union of a line and a (possibly reducible) conic. In particular we can treat $(\PP^3,\Delta)$ as a special case of family (B.1). We now follow the argument of Mella \cite{mella} very closely. 

Since we can assume $\Sigma$ is smooth, without loss of generality it is given by the equations
\[ \Sigma = \VV(xz-y^2,\; xt-yz, \; yt-z^2) \subset \PP^3. \]
Let $\xi_1,\xi_2,\xi_3$ denote the three quadrics generating $I(\Sigma)$. The equation of $\Delta$ must be of the form
\begin{equation} \Delta = \VV(a_1\xi_1^2 + a_2\xi_1\xi_2 + a_3\xi_1\xi_3 + a_4\xi_2^2 + a_5\xi_2\xi_3 + a_6\xi_3^2) \label{eqGamma} \end{equation}
for some constants $a_1,\ldots,a_6\in \CC$. Moreover the quadrics satisfy the syzygies 
\begin{equation} z\xi_1 - y\xi_2 + x\xi_3 = 0, \qquad
                 t\xi_1 - z\xi_2 + y\xi_3 = 0 \label{eqX} \end{equation}
and thus we can write the blowup of $\Sigma$ as $\sigma\colon X\subset \PP^3\times\PP^2_{\xi_1,\xi_2,\xi_3}\to \PP^3$ where $X$ is the complete intersection of codimension two cut out by the two equations \eqref{eqX}.

We note that the fibres of the projection $\pi\colon X\to \PP^2_{\xi_1,\xi_2,\xi_3}$ are precisely the secant lines to $C$, and that $\sigma^{-1}_*\Delta = \pi^{-1}(\Gamma)$ is the preimage of the conic $\Gamma\subset \PP^2$ with the equation \eqref{eqGamma}. Pick three non-collinear points on $\Gamma$ corresponding to three lines $\ell_1,\ell_2,\ell_3\subset \Delta$ which are secant lines to $\Sigma$. 

We consider the birational map $\varphi\colon \PP^3\dashrightarrow \PP^3$ which is the degenerate cubo-cubic Cremona transformation with baselocus $\Sigma\cup\ell_1\cup\ell_2\cup\ell_3$, as described in \S\ref{sec!sp-bidegree33}. Setting $\Delta_X := \psi^{-1}_*(\Delta) + E$ and $\Delta':=\psi'_*(\Delta_X)$ gives the resolution of a volume preserving map $\varphi\colon (\PP^3,\Delta)\dashrightarrow(\PP^3,\Delta')$, as in \eqref{eq!vp}. Since $\psi^{-1}_*(\Delta) = 2H' - E'$ and $E = 2H' - F_1'-F_2'-F_3'$ we see that $\Delta'=D_1'+D_2'$ is given by the union of two quadric surfaces: $D_1'=\psi'_*(\psi^{-1}_*(\Delta))$ contains the twisted cubic $\Sigma'\subset \PP^3$ and $D_2'=\psi'_*(E)$ contains the three secant lines $\ell_1',\ell_2',\ell_3'$ to $\Sigma'$. 


\subsection{The map (viii) between (C.2) and (C.3)}

Suppose that $\Delta=D_1+D_2$ is the union of two quadric surfaces intersecting in an ordinary curve $\Sigma = D_1\cap D_2$. Let $C\subset D_1$ be a conic, let $p\in \Sigma$ be a general point and consider the map $\varphi\colon \PP^3\dashrightarrow \PP^3$ of bidegree (2,2) with $\Bs(\varphi)=C\cup p$, as described in \S\ref{sec!bidegree22}. 

Setting $\Delta_X := \psi^{-1}_*(\Delta)$ and $\Delta':=\psi'_*(\Delta_X)$ we get a volume preserving map $\varphi\colon (\PP^3,\Delta)\dashrightarrow(\PP^3,\Delta')$. Since $\psi^{-1}_*(D_1)= H'$ and $\psi^{-1}_*(D_2) = 3H'-2E'-F'$ we see that $\Delta'$ is given by the union of a plane and a cubic surface, where the cubic surface passes through the conic $\Gamma'$ and has a double point at $p'\in\PP^3$.


\subsection{The map (ix) between (C.3) and (C.4)}

Consider the case in which $\Delta=D_1+D_2$ consists of a plane $D_1$ and singular cubic surface $D_2$ intersecting in an ordinary curve $\Sigma=D_1\cap D_2$. If $D_2$ is either a cone, non-normal, or singular at a point of $\Sigma$, then $\Delta$ has a point of multiplicity 3 and we can treat it as a special case of family (A.1). Therefore we may assume that $D_2$ has a double point $p\in D_2\setminus \Sigma$. Without loss of generality $D_1=\VV(t)$ and $D_2$ is singular at $p=(1,0,0,0)$. Thus $D_2=\VV(a_2t+b_3)$ for some $a_2,b_3\in\CC[x,y,z]$.

We let $\Delta'=\VV(b_3t)$ and consider the birational map
\[ \varphi\colon(\PP^3,\Delta) \dashrightarrow (\PP^3,\Delta'), \qquad \varphi(t,x,y,z) = \left(\frac{a_2tz}{a_2t+b_3},\, x,\, y,\, z\right).  \]
This is volume preserving, since in the affine patch of $\PP^3$ where $z=1$ we see that
\[ \varphi^*(\omega_{\Delta'}) = \varphi^*\left(\frac{dx\wedge dy \wedge dt}{b_3t}\right) = \frac{dx\wedge dy \wedge dt}{t(a_2t+b_3)} = \omega_\Delta. \]

\section{Conclusion} \label{sec!the-proof}

We have constructed all of the volume preserving maps that appear in Figure~\ref{fig!flowchart}. This now easily implies our main result.

\begin{proof}[Proof of Theorem~\ref{thm!main-result}]
We can immediately reduce to the case of $(\PP^3,\Delta)$ in which $\Delta=\VV(ta_3(x,y,z))$ is the union of a plane $D_1=\VV(t)$ and $D_2=\VV(a_3)$ the cone over an ordinary cubic curve. Without loss of generality we may assume that $a_3$ is not divisible by $z$ and then
\[ \varphi\colon (\PP^3,\Delta) \dashrightarrow (\PP^1\times\PP^2,\Delta') \qquad \varphi(t,x,y,z) = (t,z)\times(x,y,z) \]
is a volume preserving map, where $\Delta'= (\{0\}\times\PP^2) + (\PP^1\times E) + (\{\infty\}\times\PP^2)$ is the boundary divisor appearing in the statement of Theorem~\ref{thm!main-result} and $E$ is the cubic curve $E=\VV(a_3) \subset \PP^2_{x,y,z}$. If $E$ is smooth then $\coreg(\PP^1\times\PP^2,\Delta')=1$. Otherwise $E$ is nodal, $\coreg(\PP^1\times\PP^2,\Delta')=0$ and by applying birational maps of the form $\id\times\varphi\colon\PP^1\times\PP^2\dashrightarrow \PP^1\times\PP^2$ we can find a volume preserving map which sends $\PP^1\times E$ onto $\PP^1\times\VV(xyz)$, as in Example~\ref{eg!P2}. 
\end{proof}

\end{document}